\numberwithin{equation}{section}
\DeclareFontFamily{OT1}{rsfs}{}
\DeclareFontShape{OT1}{rsfs}{n}{it}{<-> rsfs10}{}
\DeclareMathAlphabet{\mathscr}{OT1}{rsfs}{n}{it}
\theoremstyle{plain}
\newtheorem{theorem}{Theorem}[section]
\newtheorem{proposition}[theorem]{Proposition}
\newtheorem{lemma}[theorem]{Lemma}
\newtheorem{corollary}[theorem]{Corollary}
\theoremstyle{definition}
\newtheorem{definition}[theorem]{Definition}
\newtheorem{remark}[theorem]{Remark}
\newcommand\R{\mathbb{R}}
\newcommand\Z{\mathbb{Z}}
\newcommand\eps{\varepsilon}
\begin{document}

\title[Universality of potential well dynamics]{On the universality of potential well dynamics}

\author{Terence Tao}
\address{UCLA Department of Mathematics, Los Angeles, CA 90095-1555.}
\email{tao@math.ucla.edu}


\subjclass[2010]{37C10, 37J99, 74J30}

\begin{abstract}  Given a smooth potential function $V \colon \R^m \to \R$, one can consider the ODE $\partial_t^2 u = -(\nabla V)(u)$ describing the trajectory of a particle $t \mapsto u(t)$ in the potential well $V$.  We consider the question of whether the dynamics of this family of ODE are \emph{universal} in the sense that they contain (as embedded copies) any first-order ODE $\partial_t u = X(u)$ arising from a smooth vector field $X$ on a manifold $M$.  Assuming that $X$ is nonsingular and $M$ is compact, we show (using the Nash embedding theorem) that this is possible precisely when the flow $(M,X)$ supports a geometric structure which we call a \emph{strongly adapted $1$-form}; many smooth flows do have such a $1$-form, but we give an example (due to Bryant) of a flow which does not, and hence cannot be modeled by the dynamics of a potential well.  As one consequence of this embeddability criterion, we construct an example of a (coercive) potential well system which is \emph{Turing complete} in the sense that the halting of any Turing machine with a given input is equivalent to a certain bounded trajectory in this system entering a certain open set.  In particular, this system contains trajectories for which it is undecidable whether that trajectory enters such a set.

Remarkably, the above results also hold if one works instead with the nonlinear wave equation $\partial_t^2 u - \Delta u = -(\nabla V)(u)$ on a torus instead of a particle in a potential well, or if one replaces the target domain $\R^m$ by a more general Riemannian manifold.
\end{abstract}

\maketitle


\section{Introduction}

Define a \emph{smooth flow} to be a pair $(M,X)$ consisting of a smooth manifold $M$ and a vector field\footnote{In this paper, all vector fields, differential forms, Riemannian metrics, Hamiltonians, potential functions, etc. are understood to be smooth.} $X$ on $M$.  Define a \emph{trajectory} of a smooth flow to be a solution $u \colon I \to M$ to the first-order ordinary differential equation (ODE)
\begin{equation}\label{uxt}
\partial_t u = X(u)
\end{equation}
for some interval $I \subset \R$.  The Picard existence and uniqueness theorem asserts that for any initial datum $u_0 \in M$, there is a unique trajectory $u \colon I \to M$ to \eqref{uxt} with initial data $u(0) = u_0$ and with a maximal open interval of existence $0 \in I \subset \R$; furthermore, under reasonable growth conditions on $X$ (e.g. if $X$ is bounded) the solution is global in the sense that $I=\R$.  In particular, when $M$ is compact all trajectories can be extended to be global in time, and we can define flow maps $e^{tX} \colon M \to M$ for any time $t$; in this case the dynamics are \emph{almost periodic} since all trajectories are clearly precompact.  We say that a smooth flow is \emph{nonsingular} if there are no fixed points, or equivalently if the vector field $X$ is nowhere vanishing.

Define a \emph{morphism}  of one smooth flow $(M,X)$ to another $(M',X')$ to be a smooth map $\phi \colon M \to M'$ that takes trajectories of $(M,X)$ to trajectories of $(M',X')$, or equivalently that
$$ d\phi(X(y)) = X'(\phi(y))$$
for all $y \in M$.  Define an \emph{embedding} of $(M,X)$ into $(M',X')$ is a morphism $\phi \colon M \to M'$ which is also an injective immersion.

Informally, the presence of an embedding of $(M,X)$ into $(M',X')$ indicates that the dynamics of the former system are contained in that of the latter.  For instance:
\begin{itemize}
\item A stationary solution in $(M,X)$ is the same thing as an embedding into $(M,X)$ of the trivial flow $(\mathrm{pt}, 0)$;
\item A periodic solution in $(M,X)$ is the same thing as an embedding into $(M,X)$ of the circle shift $(\R/\Z, 1)$;
\item An invariant torus in $(M,X)$ (in the sense of KAM theory) is the same thing as an embedding of $(M,X)$ of a torus shift $((\R/\Z)^d, \alpha)$ for some constant velocity field $\alpha \in \R^d$.
\end{itemize}

Let us say that a class ${\mathcal C}$ of smooth flows is \emph{universal} if any other smooth flow $(M,X)$ may be embedded in at least one system in this class ${\mathcal C}$.  Here is a simple example of such a universal class:

\begin{proposition}[Hamiltonian dynamics are universal]  Let ${\mathcal H}$ be the class of \emph{Hamiltonian} flows $(M,X)$, that is to say smooth flows in which $M = (M,\omega)$ is a symplectic manifold, and there is a Hamiltonian $H \colon M \to \R$ with the property that $\omega(X, Y) = {\mathcal L}_Y H$ for all vector fields $Y$, where we use ${\mathcal L}_Y$ to denote the Lie derivative (which in this case is the same as the ordinary derivative since $H$ is scalar).  Then ${\mathcal H}$ is universal.
\end{proposition}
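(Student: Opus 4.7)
The plan is to embed any smooth flow $(M,X)$ into the cotangent bundle $T^*M$ equipped with its canonical symplectic form $\omega$. This is the most natural symplectic manifold canonically associated to $M$, and it comes with a tautological inclusion of $M$ as the zero section $\phi \colon M \to T^*M$, $q \mapsto (q,0)$, which is manifestly a smooth injective immersion.

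The key choice is the Hamiltonian. I would take $H \colon T^*M \to \R$ defined by the evaluation formula $H(q,p) := \langle p, X(q) \rangle$, where $\langle \cdot,\cdot\rangle$ denotes the duality pairing between $T_q^*M$ and $T_qM$. In Darboux coordinates $(q^i,p_i)$ with $\omega = \sum_i dq^i \wedge dp_i$, Hamilton's equations then read
$$ \dot q^i = \frac{\partial H}{\partial p_i} = X^i(q), \qquad \dot p_i = -\frac{\partial H}{\partial q^i} = -p_j\, \frac{\partial X^j}{\partial q^i}(q). $$
On the zero section $\{p=0\}$ the second equation collapses to $\dot p_i = 0$, so the zero section is preserved by the Hamiltonian flow, and the flow induced on it is precisely $\dot q = X(q)$.

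Writing $X_H$ for the Hamiltonian vector field, this computation yields $d\phi(X(q)) = (X(q),0) = X_H(\phi(q))$, so $\phi$ is a morphism of flows, and hence an embedding of $(M,X)$ into the Hamiltonian system $(T^*M, \omega, H)$. I do not anticipate any real obstacle: the only subtlety is checking sign conventions so that the evaluation Hamiltonian really produces $X$ (rather than $-X$) on the zero section, but this reduces to a routine calculation in a Darboux chart. The global geometric content is simply that the cotangent bundle always has enough room to realize any vector field on its base as the restriction of a Hamiltonian one, so no genuine topological or analytic hypothesis on $(M,X)$ is needed.
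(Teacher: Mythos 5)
Your proof is correct and matches the paper's argument essentially verbatim: both embed $(M,X)$ as the zero section of $T^*M$ with the canonical symplectic form and take the evaluation Hamiltonian $H(q,p)=p(X(q))$, then observe that the zero section is invariant and the induced flow on it is $\dot q = X(q)$.
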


\begin{proof}  Let $(M,X)$ be a smooth flow.  As is well known, the cotangent bundle $T^* M$ of $M$ can be equipped with a symplectic form $\omega$, which in local coordinates is given by
\begin{equation}\label{omega}
 \omega = \sum_i dq_i \wedge dp_i
\end{equation}
where $q_1,\dots,q_n$ are local coordinates of $M$, and $p_1,\dots,p_n$ are the dual momentum coordinates.  The ODE associated to a Hamiltonian $H \colon T^* M \to \R$ is given in coordinates by Hamilton's equation of motion
\begin{equation}\label{qp}
 \partial_t q_i = \frac{\partial H}{\partial p_i}; \quad \partial_t p_i = - \frac{\partial H}{\partial q_i}.
\end{equation}
If one chooses the specific Hamiltonian $H \colon T^* M \to \R$ defined by the formula
$$ H( q, p ) \coloneqq p(X)$$
for any point $q$ in $M$ and any covector $p \in T^*_q M$, or in coordinates
\begin{equation}\label{hamil}
 H( q_1,\dots, q_n, p_1, \dots, p_n) = \sum_i p_i X_i(q_1,\dots,q_n),
\end{equation}
then one easily checks that the map $\phi  \colon M \to T^* M$ given by $\phi(q) \coloneqq (q,0)$ is an embedding; in coordinates, this asserts that any solution $q  \colon I \to M$ to the ODE $\partial_t q = X(q)$ can also be viewed as solutions to Hamilton's equations of motion \eqref{qp} for the Hamiltonian \eqref{hamil} by setting $p(t) = 0$ for all times $t \in I$.
\end{proof}

Informally, the above proposition asserts that Hamiltonian dynamics can be as complicated as an arbitrary smooth dynamics.

A familiar subclass of Hamiltonian systems arise from the equations 
\begin{equation}\label{well-ode}
 \partial_{t}^2 u = -(\nabla_{\R^m} V)(u)
\end{equation}
of a particle in a smooth potential well $V  \colon \R^m \to \R$, where $\nabla_{\R^m} V  \colon \R^m \to \R^m$ denotes the gradient of $V$.  Indeed, by setting $q(t) \coloneqq u(t)$ and $p(t) \coloneqq \partial_t u(t)$, this ODE may be expressed as a system
\begin{equation}\label{system}
 \partial_t q = p; \quad \partial_t p = - (\nabla_{\R^m} V)(q),
\end{equation}
which is the Hamiltonian flow on the cotangent bundle
$$ T^* \R^m \coloneqq \{ (q,p): q,p \in \R^m \}$$
(with the usual symplectic form \eqref{omega}) with Hamiltonian
$$ H(q,p) \coloneqq \frac{1}{2} |p|_{\R^m}^2 + V(q)$$
where $|p|_{\R^m}$ denotes the Euclidean magnitude of $p$.  We will denote this flow as $\mathrm{Well}(\R^m, V)$.  If we assume that $V$ is \emph{coercive} in the sense that $V(q) \to +\infty$ as $q \to +\infty$, then conservation of the Hamiltonian ensures that trajectories in $\mathrm{Well}(\R^m, V)$ stay bounded, and hence global in time.

One can generalise the ODE \eqref{well-ode} to the \emph{nonlinear wave equation} (NLW)
\begin{equation}\label{nlw}
\partial_{t}^2 u - \Delta_{(\R/\Z)^d} u = -(\nabla_{\R^m} V) u
\end{equation}
where $u  \colon \R \times (\R/\Z)^d \to \R^m$ is now a smooth function of one time variable $t$ and $d$ (periodic) spatial variables $x_1,\dots,x_d$ for some $d \geq 0$ (or equivalently (by ``currying''), a smooth map from $\R$ to $C^\infty( (\R/\Z)^d \to \R^m )$), and $\Delta_{(\R/\Z)^d} = \sum_{k=1}^d \partial_{x_k}^2$ is the spatial Laplacian on $(\R/\Z)^d$.  We restrict attention here to the periodic spatial domain $(\R/\Z)^d$ to avoid technical issues relating to decay at spatial infinity.  Solutions to the potential well ODE \eqref{well-ode} can be identified with the solutions to the NLW \eqref{nlw} which are constant in the spatial variables.  Writing $q(t) \coloneqq u(t) \in C^\infty((\R/\Z)^d)$ and $p(t) \coloneqq \partial_t u(t)  \in C^\infty((\R/\Z)^d)$ as before, we can rewrite the NLW \eqref{nlw} as a first-order system
\begin{equation}\label{pqd}
 \partial_t q = p; \quad \partial_t p = \Delta_{(\R/\Z)^d} q -(\nabla_{\R^m} V)(q)
\end{equation}
which is formally a Hamiltonian flow on the infinite-dimensional phase space $C^\infty((\R/\Z)^d \to \R^m) \times C^\infty((\R/\Z)^d \to \R^m)$ with Hamiltonian
$$ H(q,p) \coloneqq \int_{(\R/\Z)^d} \frac{1}{2} |p|_{\R^m}^2 + \frac{1}{2} \sum_{k=1}^d |\partial_{x_k} q|_{\R^m}^2 + V(q)\ d\mathrm{Vol}(x)$$
where $\partial_{x_k}$ denotes the partial derivative in the $x_k$ coordinate of $(\R/\Z)^d$.  We will denote this (infinite-dimensional) system as $\mathrm{NLW}( (\R/\Z)^d, \R^m, V )$; the potential well flow $\mathrm{Well}(\R^m, V)$ then corresponds to the special case $d=0$.  While $\mathrm{NLW}( (\R/\Z)^d, \R^m, V )$ is no longer finite-dimensional for $d>0$, one can still  define the notion of an embedding of a smooth flow $(M,X)$ into $\mathrm{NLW}( (\R/\Z)^d, \R^m, V )$, namely a smooth (in the G\^ateaux sense) injective immersion $\phi$ from the manifold $M$ to the vector space $C^\infty((\R/\Z)^d \to \R^m) \times C^\infty((\R/\Z)^d \to \R^m)$, which maps trajectories of $(M,X)$ to solutions to \eqref{pqd}.  For instance, as in the finite-dimensional case, stationary solutions, periodic solutions or invariant tori for NLW are the same thing as smooth embeddings of a point, circle flow, and torus flow respectively.  Also, we may trivially embed $\mathrm{Well}(\R^m, V)$ into $\mathrm{NLW}( (\R/\Z)^d, \R^m, V )$ by mapping any point $(q,p)$ in $\R^m \times \R^m$ to the pair $(q,p) \in C^\infty((\R/\Z)^d \to \R^m) \times C^\infty((\R/\Z)^d \to \R^m)$ of constant functions from $(\R/\Z)^d$ to $q$ and $p$ respectively.

One can generalise the systems $\mathrm{Well}(\R^m,V)$ and $\mathrm{NLW}( (\R/\Z)^d, \R^m, V )$ further, by replacing the Euclidean space $\R^m$ with a more general Riemannian manifold $(M,g)$.  In this setting, $V  \colon M \to \R$ is now a smooth potential on $M$, and $\mathrm{Well}(M,V)$ is the flow on the cotangent bundle
$$ T^* M \coloneqq \{ (q,p): q \in M, p \in T^*_q M \}$$
associated to the Hamiltonian
$$ H(q,p) \coloneqq \frac{1}{2} |p|_{g(q)^{-1}}^2 + V(q)$$
where $||_{g(q)^{-1}}$ denotes the metric on $T^*_q M$ induced by the metric $g(q)$ (or more precisely, the inverse of this metric).  The equations of motion are then given by
\begin{equation}\label{goq}
 \partial_t q = g(q)^{-1} \cdot p; \quad \nabla_t p = - (d V)(q),
\end{equation}
where $g(q)^{-1} \cdot p$ is the tangent vector in $T_q M$ dual to the cotangent vector $p \in T^*_q M$ with respect to the metric $g(q)$, $dV$ is the exterior derivative of $V$, and $\nabla_t$ is the covariant derivative (using the pullback of the Levi-Civita connection by $q$); one can also write $q = u$ and $p = g(q) \cdot \partial_t u$, where $g(q) \cdot \partial_t u$ denotes the covector in $T^*_q M$ dual to the vector $\partial_t u \in T_q M$ with respect to the metric $g(q)$, and $u  \colon I \to M$ solves the second-order ODE
$$ \nabla_t \partial_t u = - (\nabla_g V)(u)$$
where $\nabla_g$ is the gradient with respect to the metric $g$.  Note that in the case $V=0$, this is just the dynamics of geodesic flow on $M$.  One can similarly define $\mathrm{NLW}((\R/\Z)^d, M, V)$ to be the formal system on $C^\infty((\R/\Z)^d \to M) \times C^\infty((\R/\Z)^d \to M)$ associated to the formal Hamiltonian
$$ H(q,p) \coloneqq \int_{(\R/\Z)^d} \frac{1}{2} |p|_{g(q)^{-1}}^2 + \frac{1}{2} \sum_{k=1}^d |\partial_{x_k} q|_{g(q)}^2 + V(q)\ d\mathrm{Vol}(x),$$
where $| |_{g(q)}$ denotes the metric on $T_q M$ induced by $g(q)$; the equations of motion are
\begin{equation}\label{qwert}
 \partial_t q = g(q)^{-1} \cdot p; \quad \partial_t p = \sum_{k=1}^d g(q) \cdot \nabla_{x_k} \partial_{x_k} q -(d V)(q)
\end{equation}
where $\nabla_{x_i}$ is the covariant derivative using the pullback of the Levi-Civita connection by $q$.  Writing $q = u$ and $p = g(q) \cdot \partial_t u$, we can also write \eqref{qwert} as a single second-order PDE
$$ \nabla_t \partial_t u - \sum_{k=1}^d \nabla_{x_k} \partial_{x_k} u = - (\nabla_g V)(u)$$
which is the equation of a wave map with potential.  One can also express this equation in coordinates using Christoffel symbols, but we will not do so here.

In this paper we study the universality properties of the class of potential well systems $\mathrm{Well}(\R^m,V)$, where we allow the number $m$ of degrees of freedom, as well as the smooth potential $V  \colon \R^m \to \R$ to be arbitrary; we also consider the analogous problem for $\mathrm{NLW}((\R/\Z)^d, \R^m, V)$, $\mathrm{Well}(M,V)$, and $\mathrm{NLW}((\R/\Z)^d, M, V)$.  It turns out that the following concept (bearing some faint resemblance\footnote{We thank Robert Bryant for this remark.} to Gromov's notion \cite{gromov} of a symplectic form that tames an almost complex structure) plays a central role:

\begin{definition}[Adapted $1$-forms]  Let $(M,X)$ be a smooth non-singular flow.  We say that a $1$-form $\theta$ on $M$ is \emph{weakly adapted} to this system if the scalar function $\theta(X)$ is everywhere non-negative and the Lie derivative ${\mathcal L}_X(\theta)$ of $\theta$ along $X$ is an exact $1$-form, thus ${\mathcal L}_X(\theta) = dL$ for some $L$.  If furthermore $\theta(X)$ is strictly positive everywhere (as opposed to merely being non-negative), we say that $\theta$ is \emph{strongly adapted} to $(M,X)$.
\end{definition}

For instance, the zero $1$-form $0$ is weakly adapted to $(M,X)$ but not strongly adapted.  The question of whether a given flow $(M,X)$ supports a strongly adapted $1$-form will end up being a key focus of this paper.

The relevance of these concepts can be seen by the following calculation.  Recall that every cotangent bundle $T^* M$ supports a \emph{canonical $1$-form} $\theta$, defined in canonical coordinates $q_1,\dots,q_m, p_1,\dots,p_m$ as $\theta \coloneqq \sum_{i=1}^m p_i dq_i$.

\begin{proposition}[Canonical form is weakly adapted]\label{not}  In the flow $\mathrm{Well}(M,V)$ (and hence also in $\mathrm{Well}(\R^m,V)$), the canonical $1$-form $\theta$ is weakly adapted to the flow. 
\end{proposition}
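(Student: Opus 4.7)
The plan is to work on $T^*M$ in canonical coordinates $(q_1,\dots,q_m,p_1,\dots,p_m)$ in which $\theta = \sum_i p_i\,dq_i$ and $\omega = \sum_i dq_i \wedge dp_i$, and to verify the two defining properties of weak adaptation by direct computation, leveraging the Hamiltonian structure of $\mathrm{Well}(M,V)$ recorded in \eqref{goq}.

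For the pointwise non-negativity of $\theta(X)$: Hamilton's equations give $X(q_i) = \partial H/\partial p_i$, so $\theta(X) = \sum_i p_i\,\partial H/\partial p_i$. Since $H(q,p) = \tfrac{1}{2}|p|_{g(q)^{-1}}^2 + V(q)$ splits into a quadratic form in $p$ plus a function depending only on $q$, a direct computation $\partial H/\partial p_i = \sum_j g^{ij}(q) p_j$ yields $\theta(X) = \sum_{i,j} g^{ij}(q) p_i p_j = |p|_{g(q)^{-1}}^2 \geq 0$. Geometrically, this is simply twice the kinetic energy, which is manifestly non-negative (and in fact strictly positive away from the zero section).

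For exactness of ${\mathcal L}_X \theta$: note first that $d\theta = -\omega$, and that the Hamiltonian condition $\omega(X,Y) = {\mathcal L}_Y H$ is equivalent to $\iota_X \omega = dH$. Applying Cartan's magic formula,
\[ {\mathcal L}_X \theta = d(\iota_X \theta) + \iota_X d\theta = d(\theta(X)) - dH = d\bigl(\theta(X) - H\bigr), \]
so ${\mathcal L}_X \theta = dL$ with $L \coloneqq \theta(X) - H$. In the potential well setting this primitive is $L(q,p) = \tfrac{1}{2}|p|_{g(q)^{-1}}^2 - V(q)$, i.e.\ the usual Lagrangian (kinetic minus potential energy), and the identity $L = \theta(X) - H$ is precisely the Legendre transform that underlies the passage from Hamiltonian to Lagrangian mechanics.

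There is no serious obstacle here: both $\theta$ and $\omega$ are intrinsically defined on $T^*M$, so no coordinate patching is required, and the rest is bookkeeping with Cartan's formula. The only subtlety is the sign convention $d\theta = -\omega$ (rather than $+\omega$), which must be tracked carefully to obtain the correct identification of $L$; once this is done, both conditions in the definition of a weakly adapted $1$-form fall out immediately.
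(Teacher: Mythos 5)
Your argument is correct and follows the same route as the paper: compute $\theta(X)$ directly to get twice the kinetic energy, then apply Cartan's formula together with the Hamiltonian identity to obtain ${\mathcal L}_X\theta = d(\theta(X)-H)$. The one point worth flagging is that your signs are in fact the internally consistent ones: with the paper's convention $\omega = \sum_i dq_i\wedge dp_i$ from \eqref{omega} and the Hamiltonian condition $\omega(X,Y)={\mathcal L}_Y H$, one indeed has $d\theta = -\omega$ and $\iota_X\omega = dH$, exactly as you write. The paper's published proof instead asserts $d\theta = \omega$ and $\iota_X\omega = -dH$; these two slips compensate, so it lands on the same correct primitive $L=\theta(X)-H$, but your version tracks the conventions more carefully.
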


\begin{proof}  Let $X$ be the vector field on $T^* M$ associated to $\mathrm{Well}(M,V)$, thus from \eqref{goq} one has in coordinates that
$$ X(q,p) = (g(q)^{-1} \cdot p, -(dV)(q))$$
and hence
$$ \theta(X)(q,p) = g(q)^{-1}(p,p) = |p|_{g(q)^{-1}}^2 \geq 0.$$
On the other hand, from Cartan's formula one has
$$ {\mathcal L}_X \theta = d(\iota_X \theta) + \iota_X(d\theta)$$
where $\iota_X$ denotes contraction by $X$.  We have $\iota_X \theta = \theta(X)$ and $d\theta = -\omega$, and by Hamilton's equations of motion we have $\iota_X \omega = dH$, hence we have ${\mathcal L}_X \theta = dL$ where $L$ is the Lagrangian 
\begin{equation}\label{L-def}
 L \coloneqq \theta(X) - H = \frac{1}{2} |p|_{g(q)^{-1}}^2 - V(q),
\end{equation}
and the claim follows.
\end{proof}

\begin{remark}  The identity ${\mathcal L}_X \theta = dL$ is closely related to Noether's theorem.  Indeed, if $Y$ is a vector field that is a symmetry of the Lagrangian (in that ${\mathcal L}_Y L = 0$) and commutes with the flow, then this identity implies that ${\mathcal L}_X(\theta(Y)) = 0$, so that $\theta(Y)$ is a conserved quantity.
\end{remark}

Another key fact is that the property of being weakly or strongly adapted is preserved by pullback:

\begin{proposition}\label{pullback}  If $\phi  \colon N \to M$ is a morphism from one smooth flow $(N,Y)$ to another $(M,X)$, and $\theta$ is a $1$-form strongly adapted to $(M,X)$, then the pullback $\phi^* \theta'$ is a $1$-form strongly adapted to $(N,Y)$.  Similarly with ``strongly'' replaced by ``weakly'' throughout.
\end{proposition}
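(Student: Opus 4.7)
The plan is to unpack both conditions in the definition of ``(strongly/weakly) adapted'' and check them directly, using the fact that the morphism condition $d\phi(Y(n)) = X(\phi(n))$ is precisely the statement that $Y$ and $X$ are $\phi$-related. Under this condition, pullback is natural with respect to interior product, exterior derivative, and Lie derivative.

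First I would verify the pointwise positivity condition. For any $n \in N$,
\begin{equation*}
(\phi^*\theta)(Y)(n) = \theta_{\phi(n)}(d\phi_n(Y_n)) = \theta_{\phi(n)}(X_{\phi(n)}) = (\theta(X))(\phi(n)),
\end{equation*}
where the middle equality uses the morphism property. Thus $(\phi^*\theta)(Y)$ equals $(\theta(X))\circ\phi$, so it is non-negative (resp.\ strictly positive) on $N$ whenever $\theta(X)$ is on $M$.

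Next I would verify the exactness of $\mathcal{L}_Y(\phi^*\theta)$. The key identity is that for $\phi$-related fields one has $\phi^*(\mathcal{L}_X\alpha) = \mathcal{L}_Y(\phi^*\alpha)$ for any differential form $\alpha$ on $M$; this can either be invoked as standard or derived in one line from Cartan's magic formula, using $\phi^*\circ d = d \circ \phi^*$ together with $\iota_Y(\phi^*\alpha) = \phi^*(\iota_X\alpha)$, the latter being a direct consequence of $\phi$-relatedness. Applying this with $\alpha = \theta$ and using the hypothesis $\mathcal{L}_X\theta = dL$,
\begin{equation*}
\mathcal{L}_Y(\phi^*\theta) = \phi^*(\mathcal{L}_X\theta) = \phi^*(dL) = d(\phi^*L),
\end{equation*}
which exhibits $\mathcal{L}_Y(\phi^*\theta)$ as an exact $1$-form (with potential $L\circ\phi$).

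There is no substantive obstacle here: the argument is entirely formal, and ``strongly'' versus ``weakly'' makes no difference since the only inequality in play is inherited pointwise via $\phi$. If anything, the one point worth stating cleanly is the naturality $\phi^* \mathcal{L}_X = \mathcal{L}_Y \phi^*$ for $\phi$-related vector fields, since this is what turns the hypothesis $\mathcal{L}_X\theta = dL$ into the required exactness on $N$.
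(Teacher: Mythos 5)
Your argument is correct and follows the same two-step structure as the paper: first the pointwise identity $(\phi^*\theta)(Y) = (\theta(X))\circ\phi$ to transfer positivity, then the naturality identity $\mathcal{L}_Y(\phi^*\theta) = \phi^*(\mathcal{L}_X\theta)$ to transfer exactness. The only (minor) difference is how you justify that naturality: the paper differentiates the flow-map identity $(e^{tY})^*\phi^*\theta = \phi^*(e^{tX})^*\theta$ at $t=0$, whereas you derive it algebraically from Cartan's formula together with $\phi^*d = d\phi^*$ and $\iota_Y\phi^* = \phi^*\iota_X$; your route has the small advantage of not needing to invoke local existence of the flows, but both are standard and equally valid here.
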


\begin{proof}  We have $(\phi^* \theta)(Y) = \phi^*( \theta(X) )$, so $(\phi^* \theta)(Y)$ is positive (resp. non-negative) if $\theta(X)$ is.  Also, for any time $t$, $(e^{tY})^* \phi^* \theta = \phi^* (e^{tX})^*\theta$; differentiating at $t=0$, we conclude that ${\mathcal L}_Y( \phi^* \theta) = \phi^*({\mathcal L}_{X} \theta)$.  Since ${\mathcal L}_{X} \theta$ is exact, ${\mathcal L}_Y \phi^* \theta$ is also.  The claim follows.
\end{proof}

These two facts suggest that the property of supporting an adapted $1$-form could serve as an obstruction to embedding into a potential well system.  Our main theorem confirms this for compact non-singular systems, and in fact shows that this is the \emph{only} obstruction in that case: 

\begin{theorem}[Criterion for embeddability]\label{main}  Let $(N,Y)$ be a compact smooth non-singular flow, and let $d \geq 0$ be an integer.  Then the following are equivalent.
\begin{itemize}
\item[(i)]  There exists $m \geq 1$ and a smooth potential $V  \colon\R^m \to \R$ such that $(N,Y)$ is embedded in $\mathrm{Well}(\R^m, V)$.
\item[(ii)]  There exists $m \geq 1$ and a smooth potential $V  \colon \R^m \to \R$ such that $(N,Y)$ is embedded in $\mathrm{NLW}((\R/\Z)^d, \R^m, V)$.
\item[(iii)]  There exists a Riemannian manifold $M$ and a smooth potential $V  \colon M \to \R$ such that $(N,Y)$ is embedded in $\mathrm{Well}(M, V)$.
\item[(iv)]  There exists a Riemannian manifold $M$ and a smooth potential $V\colon M \to \R$ such that $(N,Y)$ is embedded in $\mathrm{NLW}((\R/\Z)^d, M, V)$.
\item[(v)]  There exists a $1$-form $\theta$ strongly adapted to $(N,Y)$.
\end{itemize}
\end{theorem}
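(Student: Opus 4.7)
The strategy is to close the equivalence via the trivial implications (i)$\Rightarrow$(ii),(iii) (by the constant-function embedding of $\mathrm{Well}$ into $\mathrm{NLW}$ from the introduction, and by taking $M = \R^m$ with the Euclidean metric) and (ii),(iii)$\Rightarrow$(iv) (again by $M = \R^m$ or $d=0$), leaving only the two substantive directions (v)$\Rightarrow$(i) and (iv)$\Rightarrow$(v).

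For (v)$\Rightarrow$(i), given a strongly adapted $\theta$ with $\mathcal{L}_Y\theta = dL$, I would first construct a Riemannian metric $g$ on $N$ satisfying $g(Y,\cdot) = \theta$. This is possible because $\theta(Y) > 0$: pick any smoothly varying complement $W \subset TN$ to $\R Y$, set $g(Y,Y) := \theta(Y)$ and $g(Y,w) := \theta(w)$ for $w \in W$, and take $g|_W$ sufficiently large that the discriminant condition $g(w,w)\,\theta(Y) > \theta(w)^2$ holds. By the Nash isometric embedding theorem, choose $q \colon N \to \R^m$ realizing $g$ as the Euclidean pullback; set $p(n) := dq_n(Y(n))$ and $\phi := (q,p) \colon N \to T^*\R^m$. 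A direct computation then shows $\phi^*\theta_{\mathrm{can}}(v) = p \cdot dq(v) = g(Y,v) = \theta(v)$, and the Lagrangian identity of Proposition \ref{not} forces us to set $V \circ q := \tfrac{1}{2}\theta(Y) - L$, whereupon the tangential part of the Hamiltonian constraint $\nabla V(q) = -dp(Y)$ is automatically satisfied along $q(N)$. Specifying the normal component of $\nabla V$ on $q(N)$ to equal that of $-dp(Y)$ and extending $V$ to all of $\R^m$ via a tubular neighborhood and partition of unity completes the construction; injectivity and the immersion property of $\phi$ follow from those of $q$.

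The main technical direction is (iv)$\Rightarrow$(v). Propositions \ref{not} and \ref{pullback} give a weakly adapted $\theta_0 := \phi^*\theta_{\mathrm{can}}$ with $\theta_0(Y) = |p|^2_{g^{-1}}$ (or its $L^2$-norm on $(\R/\Z)^d$ in the NLW case), which may vanish on $Z := \phi^{-1}(\{p = 0\})$; we must upgrade this to a strongly adapted form. The key observation is that $Z$ contains no nonempty $Y$-invariant subset: along an orbit trapped in $Z$ one would have $p \equiv 0$, forcing $q$ to be constant, and then the immersion condition $d\phi(Y) = X \neq 0$ combined with $\dot p = -(dV)(q) = 0$ contradicts the nonsingularity of $Y$. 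Combined with weak-$*$ compactness of the $Y$-invariant probability measures on $N$ and continuity of $\mu \mapsto \int \theta_0(Y)\,d\mu$, this yields a uniform bound $\delta := \inf_\mu \int \theta_0(Y)\,d\mu > 0$, and a standard uniform Krylov-Bogolyubov argument then supplies $T > 0$ with
\[ \frac{1}{T}\int_0^T \theta_0(Y)(e^{tY}(n))\,dt \geq \delta/2 \]
for all $n \in N$. Setting $f(n) := \int_0^T (1-t/T)\,\theta_0(Y)(e^{tY}(n))\,dt$ and integrating by parts gives $Yf = -\theta_0(Y) + \tfrac{1}{T}\int_0^T \theta_0(Y)\circ e^{tY}\,dt$, so $\theta := \theta_0 + df$ satisfies $\theta(Y) \geq \delta/2 > 0$ while $\mathcal{L}_Y\theta = d(L_0 + Yf)$ remains exact; hence $\theta$ is strongly adapted. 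The delicate step I expect is making the uniform lower bound on empirical time-averages rigorous (via the compactness of invariant measures) and verifying that the no-invariant-subset-in-$Z$ claim transfers to the infinite-dimensional NLW setting where $p$ is a function on the torus rather than a vector.
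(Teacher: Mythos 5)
Your proposal is correct and follows the paper's overall architecture: trivial implications, (iv)$\Rightarrow$(v) via pullback of the canonical $1$-form plus an averaging argument, and (v)$\Rightarrow$(i) via a Nash isometric embedding of a metric designed so that $g(Y,\cdot)=\theta$, followed by a tubular-neighbourhood extension of the potential. The (v)$\Rightarrow$(i) half is essentially identical to the paper's proof (the formula $V\circ q = \tfrac12\theta(Y)-L$ matches, since $|p|^2_{\R^m}=g(Y,Y)=\theta(Y)$).

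Where you diverge is in the averaging step of (iv)$\Rightarrow$(v). The paper observes that $\theta_0(Y)$ cannot vanish on any finite arc $\{e^{tY}y: 0\le t\le T\}$, $T>0$ --- exactly by your contradiction (if $P\equiv 0$ on the arc, then $\partial_t Q\equiv 0$, so $d\phi(Y)=0$, contradicting that $\phi$ is an immersion and $Y$ is nonvanishing), but applied \emph{locally} to the arc rather than to a full invariant orbit --- and then upgrades to strong adaptedness by the elementary average $\tilde\theta:=\int_0^1(e^{tY})^*\theta_0\,dt$, which is automatically weakly adapted and whose contraction $\tilde\theta(Y)(y)=\int_0^1\theta_0(Y)(e^{tY}y)\,dt$ is strictly positive by the no-arc property. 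You instead prove the weaker ``no invariant subset of $Z$'' statement and invoke weak-$*$ compactness of invariant probability measures plus a uniform Krylov--Bogolyubov time-average bound to obtain a $T$ and then correct $\theta_0$ by the exact form $df$ with $f(n)=\int_0^T(1-t/T)\theta_0(Y)(e^{tY}n)\,dt$. Your computation of $Yf$ and the conclusion that $\theta_0+df$ is strongly adapted are correct, so the argument goes through; but it uses strictly more machinery than needed. If you simply observe that your contradiction already applies to arcs (not just whole orbits) --- it never uses the bi-infiniteness of the orbit --- you could replace the ergodic-theoretic step with the paper's one-line averaging and avoid the invariant-measure discussion, including the subtlety you flag about transferring it to the infinite-dimensional NLW phase space (which, with the arc argument, is handled directly in local coordinates).
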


We prove this theorem in Section \ref{nash}.  The implication of (ii), (iii), or (iv) from (i) is trivial, and the implication from (v) from any of (i)-(iv) will follow from Proposition \ref{not}, Proposition \ref{pullback} and an averaging argument to upgrade the weakly adapted $1$-form to an adapted $1$-form.  To recover (i) from (v) we will use the Nash embedding theorem \cite{nash}, in a similar fashion to that in our previous paper \cite{tao-nlw}.  Informally, the equivalence of (i)-(iv) asserts that the almost periodic dynamics of nonlinear wave equations (or wave maps with potential) are no richer than the almost periodic dynamics of potential wells (either in Euclidean space or arbitrary manifolds), at least if one restricts to those dynamics generated by smooth non-singular vector fields.

\begin{remark}\label{coer}  If $(N,Y)$ is embedded in $\mathrm{Well}(V)$, then one can modify $V$ arbitrarily outside of a neighbourhood of the image of $N$ without affecting the embedding.  In particular, in the assertion (i) above one could assume without loss of generality that $V$ is coercive.  By shifting $V$ by a constant (which does not affect the dynamics) we may thus also assume without loss of generality that $V$ is non-negative.  Similarly for conclusions (ii), (iii), (iv).
\end{remark}

In view of Theorem \ref{main}, it is of interest to determine which compact smooth non-singular flows support adapted $1$-forms.  It turns out that there are many examples of flows with this property:

\begin{proposition}[Examples of strongly adapted $1$-forms]\label{examples}  Let $(N,Y)$ be a smooth non-singular flow.
\begin{itemize}
\item[(i)]  If the system $(N,Y)$ is \emph{isometric}, thus there is a Riemannian metric $g$ on $N$ which is preserved by $Y$ (that is to say, ${\mathcal L}_Y g = 0$, then the $1$-form $\theta = g \cdot Y$ that is dual to $Y$ with respect to $g$ is strongly adapted to $(N,Y)$.
\item[(ii)]  More generally, if the system $(N,Y)$ is \emph{geodesible}, thus there is a Riemannian metric $g$ on $N$ such that the trajectories of $(N,Y)$ are geodesics parameterised by arclength, then the $1$-form $\theta = g \cdot Y$ is strongly adapted to $(N,Y)$.  
\item[(iii)]  If $(N,Y)$ is an \emph{Anosov flow}, thus at every point $y \in N$, the tangent space $T_y N$ splits smoothly into the line $\R Y(y)$, the stable bundle $E^+_y$, and the unstable bundle $E^-_y$, then the canonical $1$-form $\theta$ (defined by setting $\theta(y)$ to take the value $1$ at $Y(y)$ and vanish at $E^+_y$ and $E^-_y$) is strongly adapted to $(N,Y)$.
\item[(iv)]  If $(N,Y)$ is the \emph{suspension} of some diffeomorphism $\Phi\colon M \to M$ on a compact manifold $M$, thus $N$ is the manifold formed from $M \times [0,1]$ by identifying $(y,1)$ with $(\Phi(y),0)$, with vector field $Y = (0,1)$ in the coordinate patch $M \times [0,1)$, then the $1$-form $\theta$ defined on the coordinate patch $M \times [0,1)$ by $\theta = dt$ (where $t$ denotes the second coordinate of $M \times [0,1)$) is strongly adapted to $(N,Y)$.  (Note that such suspensions will automatically be non-singular, even if the map $\Phi$ contains fixed points.)
\item[(v)]  The product system $(M \times (\R/\Z), (X,1))$ of an arbitrary smooth flow $(M,X)$ with the circle shift $(\R/\Z, 1)$ will be non-singular and has $dt$ as a strongly adapted $1$-form, where $t$ is the second coordinate on the coordinate patch $M \times [0,1)$.
\end{itemize}
\end{proposition}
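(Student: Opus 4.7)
The plan is to handle each of the five cases by exhibiting the prescribed $1$-form $\theta$ and verifying the two conditions separately: positivity of $\theta(Y)$ everywhere on $N$, and exactness of $\mathcal{L}_Y \theta$. In fact in every case I expect the stronger identity $\mathcal{L}_Y \theta = 0$ to hold, so that the function $L$ witnessing exactness can be taken to be constant. When $\theta(Y)$ is constant, the cleanest route will be Cartan's formula $\mathcal{L}_Y \theta = d(\iota_Y \theta) + \iota_Y d\theta$, which reduces matters to showing $\iota_Y d\theta = 0$.

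For (i), I would expand $(\mathcal{L}_Y \theta)(Z) = \mathcal{L}_Y(g(Y,Z)) - g(Y,[Y,Z])$ using the Leibniz rule, then apply $\mathcal{L}_Y g = 0$ and $[Y,Y]=0$ to see that the two remaining terms cancel, while $\theta(Y) = |Y|_g^2 > 0$ by non-singularity. For (ii), since the trajectories are unit-speed geodesics we have $\theta(Y) = |Y|_g^2 = 1$ and $\nabla_Y Y = 0$; expanding $d\theta(Y,Z) = Y(g(Y,Z)) - Z(1) - g(Y,[Y,Z])$ and replacing the bracket via the torsion-freeness of the Levi-Civita connection collapses the expression to $g(Y, \nabla_Z Y) = \tfrac{1}{2} Z(|Y|_g^2) = 0$. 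For (iii), $\theta(Y) = 1$ holds by construction; to check $\iota_Y d\theta = 0$ I would evaluate $d\theta(Y,Z)$ on local sections $Z$ of each of the three flow-invariant subbundles $\R Y$, $E^+$, $E^-$, and since each subbundle is preserved by $d(e^{tY})$, differentiating this invariance at $t=0$ gives $[Y,Z] \in E^\pm$ whenever $Z \in E^\pm$; hence $\theta([Y,Z]) = 0$ and the remaining terms in $d\theta(Y,Z)$ vanish trivially.

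Cases (iv) and (v) reduce mostly to checking that $\theta = dt$ descends to a globally defined $1$-form on $N$. For (v), $t$ is the angular coordinate on the $\R/\Z$ factor, so $dt$ is globally well-defined and closed; with $Y = (X,1)$ one has $\theta(Y) = 1$ and $\mathcal{L}_Y \theta = 0$ by Cartan. For (iv), I would verify that near the identification $(y,1) \sim (\Phi(y),0)$ the transition to a chart with coordinate $s \in (-\epsilon,\epsilon)$ takes the form $s = t-1$, so that $dt$ and $ds$ coincide and $\theta = dt$ extends to a global closed $1$-form on $N$; then $\theta(Y) = 1$ and $d\theta = 0$ again yield $\mathcal{L}_Y \theta = 0$.

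The main delicate point I anticipate is the Lie-bracket argument in (iii), where one has to differentiate the invariance of a subbundle (rather than of a single vector field) to conclude that $[Y,Z]$ remains in $E^\pm$; the remaining verifications are direct calculations from the Cartan and Leibniz formulas and the explicit description of the suspension.
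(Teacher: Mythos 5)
Your proposal is correct and follows essentially the same approach as the paper: verify positivity of $\theta(Y)$ directly in each case and show $\mathcal{L}_Y\theta = 0$ using the Leibniz rule (for (i),(ii)), flow-invariance of the Anosov splitting (for (iii)), and the triviality of $dt$ on the time factor (for (iv),(v)). The only cosmetic differences are that you route (ii) and (iii) through Cartan's formula and an explicit bracket computation where the paper computes $(\mathcal{L}_Y\theta)(Z)$ directly (in (ii)) or simply observes that the flow preserves all three subbundles and hence $\theta$ (in (iii)), and that you verify $dt$ descends globally by hand for (iv),(v) where the paper invokes its pullback lemma (Proposition \ref{pullback}) applied to the projection onto the circle shift.
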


\begin{proof}  We first prove (i).  Clearly $\theta(Y) = g(Y,Y)$ is positive.  Since ${\mathcal L}_Y$ annihilates both $g$ and $Y$, it annihilates $\theta = g \cdot X$, so ${\mathcal L}_Y \theta = 0$ is certainly exact.

Part (ii) is due\footnote{We are indebted to Ali Taghavi \cite{overflow} for this statement and reference.} to Sullivan \cite{sullivan} and may be proven as follows.  As in part (i), $\theta(Y) = g(Y,Y)$ is positive; in fact, because of the arclength parameterisation, we have $\theta(Y) = g(Y,Y) = 1$.  As the trajectories are geodesics, we have $\nabla_Y Y = 0$, where $\nabla$ denotes the Levi-Civita connection.  Hence, for any vector field $Z$ on $N$, we have
\begin{align*}
0 &= g(\nabla_Y Y, Z ) \\
&= \nabla_Y g(Y,Z) - g(Y, \nabla_Y Z) \\
&= {\mathcal L}_Y(g(Y,Z)) - g( Y, \nabla_Z Y ) - g( Y, [Y,Z] ) \\
&= {\mathcal L}_Y(\theta(Z)) - \frac{1}{2} \nabla_Z g(Y,Y) - \theta([Y,Z]) \\
&= {\mathcal L}_Y(\theta(Z)) - \frac{1}{2} \nabla_Z 1 - \theta({\mathcal L}_Y Z) \\
&= ({\mathcal L}_Y \theta)(Z)
\end{align*}
and hence ${\mathcal L}_Y \theta = 0$.  In \cite{sullivan} it was also noted that this calculation is reversible, thus if there exists a $1$-form $\theta$ with $\theta(Y)=1$ and ${\mathcal L}_Y \theta = 0$ then $(N,Y)$ is geodesible.

For part (iii), we have $\theta(Y)=1$ positive by construction.  The flow maps $e^{tY}$ preserves $Y$, $E^+$, and $E^-$, and thus preserves the canonical $1$-form $\theta$, hence ${\mathcal L}_Y \theta = 0$.

Part (v) is a corollary of Proposition \ref{pullback}, since we have the morphism from $N$ to the circle shift $(\R/\Z,1)$ defined by mapping $(y,t)$ to $t \hbox{ mod } 1$ for $t \in [0,1)$.  Similarly for part (vi).
\end{proof}

By Theorem \ref{main}, any of the smooth flows listed above can be embedded in a (coercive) potential well system (and hence also in a nonlinear wave equation).

In the other direction, we have the following counterexample on the $2$-torus, due to Robert Bryant:

\begin{proposition}[Bryant example]\label{thecor}  The compact non-singular smooth flow
$$
\left( (\R/\Z)^2, \sin(2\pi x) \frac{d}{dx} + \cos(2\pi x) \frac{d}{dy} \right),$$
where $x,y$ are the standard coordinates on $(\R/\Z)^2$ (see Figure \ref{fig:vec}), does not support any strongly adapted $1$-flows.  In particular (by Theorem \ref{main}), one cannot embed $(N,Y)$ into $\mathrm{Well}(\R^m, V)$, $\mathrm{Well}(M,V)$, $\mathrm{NLW}( (\R/\Z)^d, \R^m, V )$, or $\mathrm{NLW}( (\R/\Z)^d, M, V)$ for any $d \geq 0, m \geq 1$, Riemannian manifold $M$, and potential $V$.
\end{proposition}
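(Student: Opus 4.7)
The plan is to derive a contradiction from the assumption that $(N,Y)$ carries a strongly adapted $1$-form $\theta$. The strategy splits into two stages: first use the special dynamics to show that any such $\theta$ must be closed, then use homology to see that no closed form can satisfy the sign conditions on $\gamma_0 := \{x=0\}$ and $\gamma_{1/2} := \{x=1/2\}$, on which $Y$ equals $+\partial_y$ and $-\partial_y$ respectively.

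I would begin by extracting the standard conservation law: if $\theta(Y) > 0$ everywhere and $\mathcal{L}_Y \theta = dL$, then pairing with $Y$ gives $(\mathcal{L}_Y\theta)(Y) = \mathcal{L}_Y(\theta(Y)) - \theta([Y,Y]) = \mathcal{L}_Y(\theta(Y))$ and $dL(Y) = \mathcal{L}_Y L$, so $\mathcal{L}_Y(\theta(Y) - L) = 0$, i.e.\ $\theta(Y) - L$ is conserved along each trajectory of $Y$. The \emph{crux} of the plan is to upgrade this orbit-wise constancy to \emph{global} constancy on $N$. For this I analyze the flow: besides $\gamma_0$ and $\gamma_{1/2}$, the complement $N \setminus (\gamma_0 \cup \gamma_{1/2})$ decomposes into two cylindrical strips $\{0 < x < 1/2\}$ and $\{1/2 < x < 1\}$, in each of which $\dot x$ has fixed sign and $y - y_0 \equiv \frac{1}{2\pi} \log|\sin(2\pi x)/\sin(2\pi x_0)|\pmod 1$. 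As $x$ approaches $0$ or $1/2$ the $y$-coordinate winds infinitely many times around $\R/\Z$, so the full $\alpha$- and $\omega$-limit set of every non-closed orbit is the entire closed orbit $\gamma_0$ (respectively $\gamma_{1/2}$). Since $\theta(Y) - L$ is continuous, its value on a heteroclinic orbit must agree with its (necessarily constant) values on $\gamma_0$ and $\gamma_{1/2}$, and these two common values therefore coincide. It follows that $\theta(Y) - L$ is a single constant on $N$, so $dL = d(\theta(Y))$, and Cartan's formula $dL = \mathcal{L}_Y \theta = d(\theta(Y)) + \iota_Y d\theta$ forces $\iota_Y d\theta = 0$. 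Because $Y$ is nowhere vanishing on the $2$-manifold $N$, this implies $d\theta = 0$.

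Once $\theta$ is closed, the plan concludes with a short homological argument. The cycles $\gamma_0$ and $\gamma_{1/2}$, each oriented by $+\partial_y$, represent the same generator of $H_1((\R/\Z)^2, \Z)$ (indeed $\gamma_{1/2} - \gamma_0$ bounds the strip $[0,1/2] \times \R/\Z$), so closedness of $\theta$ gives
\[ \int_{\gamma_0} \theta = \int_{\gamma_{1/2}} \theta. \]
However on $\gamma_0$ one has $\theta(\partial_y) = \theta(Y) > 0$, whence $\int_{\gamma_0} \theta = \int_0^1 \theta(\partial_y)(0,y)\, dy > 0$; while on $\gamma_{1/2}$ one has $\theta(\partial_y) = -\theta(Y) < 0$, whence $\int_{\gamma_{1/2}} \theta < 0$. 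This contradiction proves the proposition.

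The main obstacle is the density step in the middle paragraph: one must verify that the heteroclinic orbits genuinely sweep out the \emph{entire} closed orbits in their limit sets (not merely approach a proper subset), so that the continuous conserved quantity $\theta(Y) - L$ is pinned to a single value across all orbits. This is what allows the weak, integrated hypothesis that $\mathcal{L}_Y \theta$ is exact to be strengthened into the rigid conclusion $d\theta = 0$; after that, the sign discrepancy on the two homologous closed orbits is a routine homological contradiction, amplifying the topological fact that $Y$ reverses its direction between $\gamma_0$ and $\gamma_{1/2}$.
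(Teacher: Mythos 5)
Your proof is correct and follows essentially the same route as the paper: derive the conserved quantity $H = \theta(Y) - L$, use the heteroclinic structure of the flow between the invariant circles $\{x=0\}$ and $\{x=1/2\}$ to show $H$ is globally constant, conclude $d\theta = 0$, and obtain a sign contradiction via Stokes on the two homologous circles where $Y = \pm\partial_y$. Your flagged ``obstacle'' (that the limit sets must sweep the \emph{entire} circles) is stronger than needed: since each circle is a single periodic orbit on which $H$ is already constant, it suffices that the $\alpha$- and $\omega$-limit sets (nonempty by compactness) of each heteroclinic orbit merely \emph{meet} the respective circles.
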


\begin{figure} [t]
\centering
\includegraphics[height=3in]{./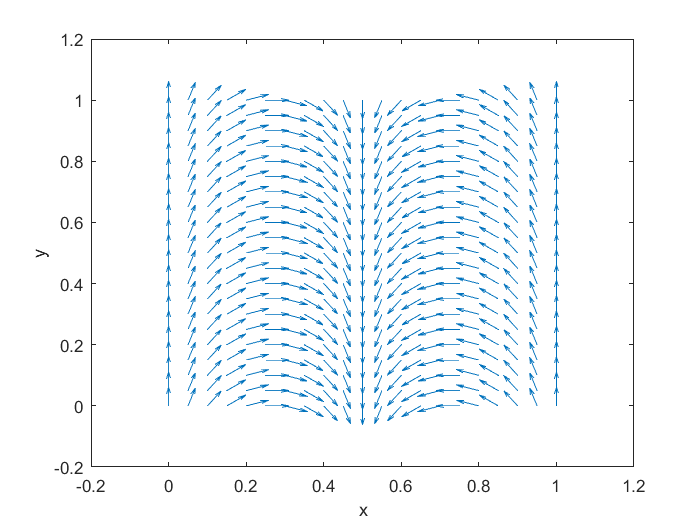}
\caption{The vector field in Proposition \ref{thecor}.  One should of course identify the $x=0,1$ edges together, as well as the $y=0,1$ edges, to obtain a vector field on the $2$-torus $(\R/\Z)^2$.}
\label{fig:vec}
\end{figure}

We reproduce Bryant's proof of this proposition in Section \ref{nonem}.  Thus we see that there are at least some almost periodic dynamics that cannot occur in a potential well or in a nonlinear wave map, and so these classes of flows are not universal.

One can also use Theorem \ref{main} (and Proposition \ref{examples}(iv)) to produce a potential well system $\mathrm{Well}(V)$ that is a universal Turing machine.  Recall that\footnote{We will use here a Turing machine with a single tape that is infinite in both directions and a single halting state, with the machine shifting the tape rather than a tape head, but the results here of course would apply to other variants of a Turing machine. It is common to isolate some special characters in the alphabet $\Sigma$, such as a blank symbol, but we will not need to do so here.} a \emph{Turing machine} consists of the following data:
\begin{itemize}
\item A finite set $Q$ of states, including an \emph{initial state} $\mathtt{START} \in Q$ and a \emph{halting state} $\mathtt{HALT} \in Q$;
\item An \emph{alphabet} $\Sigma$, which is a finite set of cardinality at least two;
\item An \emph{transition function} $\delta  \colon (Q \backslash \mathtt{HALT}) \times \Sigma \to Q \times \Sigma \times \{-1, 0, +1\}$.
\end{itemize}
 
Given a Turing machine $(Q, \mathtt{START}, \mathtt{HALT}, \Sigma, \delta)$ and an \emph{input tape} $s = (s_n)_{n \in \Z} \in \Sigma^\Z$, we can run the Turing machine by performing the following algorithm:

\begin{itemize}
\item[Step 0.]  Initialise the current state $q$ to be $\mathtt{START}$, and the current tape $t = (t_n)_{n \in \Z}$ to be $s$.
\item[Step 1.]  If $q = \mathtt{HALT}$ then halt the algorithm (and return $t$ as output).  Otherwise, compute $\delta(q, t_0) = (q', t'_0, \epsilon)$.
\item[Step 2.]  Replace $q$ with $q'$ and the $0^{\operatorname{th}}$ component $t_0$ of the tape $t$ with $t'_0$.
\item[Step 3.]  Replace the tape $t$ with the shifted tape $(t_{n-\epsilon})_{n \in \Z}$ (that is to say, perform a right shift if $\epsilon=+1$, a left shift if $\epsilon=-1$, and do nothing if $\epsilon = 0$), then return to Step 1.
\end{itemize}

Clearly, given any input $s \in \Sigma^\Z$, this Turing machine will either halt with some output $t \in \Sigma^\Z$, or run indefinitely.  

One can construct a diffeomorphism on a compact smooth manifold that is a universal Turing machine:

\begin{proposition}[Diffeomorphisms can be universal Turing machines]\label{pq}  There exists an explicitly constructible compact smooth manifold $M$ equipped with a diffeomorphism $\Phi\colon M \to M$, such that for any Turing machine $(Q, \mathtt{START}, \mathtt{HALT}, \Sigma, \delta)$ there exists an explicitly constructible open set $U_{t_{-n},\dots,t_n} \subset M$ attached to each finite string $t_{-n},\dots,t_n \in \Sigma$, and an explicitly constructible point $y_s \in M$ attached to each $s \in \Sigma^\Z$, such that the Turing machine $(Q, \mathtt{START}, \mathtt{HALT}, \Sigma, \delta)$ with input tape $s$ halts with output tape having coefficients $t_{-n},\dots,t_n$ in positions $-n,\dots,n$ respectively if and only if the orbit $y_s, \Phi(y_s), \Phi^2(y_s), \dots$ enters $U_{t_{-n},\dots,t_n}$ (that is, $\Phi^m(y_s) \in U_{t_{-n},\dots,t_n}$ for some $m$).
\end{proposition}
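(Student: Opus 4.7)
The strategy is to reduce to a single fixed universal Turing machine $U$ with alphabet $\Sigma_U$ and state set $Q_U$, and realize one computational step of $U$ as a smooth diffeomorphism on a fixed compact manifold $M$. Since any Turing machine $T$ with input $s$ can be effectively encoded as an input tape $\langle T, s\rangle$ for $U$, and the output of $U$'s computation decoded back into an output tape of $T$, the points $y_s$ and the open sets $U_{t_{-n},\dots,t_n}$ will be obtained by composing these encoding/decoding steps with the configuration coordinates of $U$.

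To encode a configuration $(q,t) \in Q_U \times \Sigma_U^{\Z}$ as a point of $M$, I would fix a base $B \geq 2|\Sigma_U|$ together with an injection $c\colon \Sigma_U \to \{0,2,4,\dots,2(|\Sigma_U|-1)\}$ (so that only even digits are used), and set
\[
\alpha(t) \coloneqq \sum_{n=0}^\infty c(t_n) B^{-(n+1)}, \qquad \beta(t) \coloneqq \sum_{n=1}^\infty c(t_{-n}) B^{-n}.
\]
Both values lie in a Cantor set $K_0 \subset (0,1)^2$ bounded away from the boundary. Take $M$ to be the disjoint union of $|Q_U|$ copies of $(\R/\Z)^2$ indexed by $Q_U$, and encode $(q,t)$ as the point $(\alpha(t),\beta(t))$ sitting inside the $q$-indexed component via the obvious chart; the resulting encoded configurations form a Cantor subset $K \subset M$ living well inside the interior of each chart.

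On $K$, the single-step dynamics of $U$ is piecewise affine: the component tells us $q$, the first base-$B$ digit of $\alpha$ tells us $t_0$, and together these determine $\delta_U(q,t_0) = (q',t'_0,\epsilon)$. The induced action on $(\alpha,\beta)$ replaces the first digit of $\alpha$ by $c(t'_0)$ and then either pushes this digit onto the front of $\beta$ (if $\epsilon=-1$), pulls the leading digit of $\beta$ onto the front of $\alpha$ (if $\epsilon=+1$), or does nothing (if $\epsilon=0$); the resulting point is then placed into the $q'$-component. Each of the finitely many such prescriptions is affine on a small closed rectangular cell, and these cells are pairwise separated by open gaps precisely because only even digits appear. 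I would then patch these local affine maps into a single globally smooth diffeomorphism $\Phi$ of $M$ using standard bump-function interpolation on the gaps, taking $\Phi$ to be the identity outside a compact neighborhood of $K$ and the identity on the $\mathtt{HALT}$-component, so that once $U$ halts the configuration is frozen.

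Finally, for a given Turing machine $T$ and input $s$, define $y_s$ to be the point of $K$ encoding $U$'s start configuration on input $\langle T, s\rangle$, and define $U_{t_{-n},\dots,t_n}$ to consist of those points in the $\mathtt{HALT}$-component of $M$ whose $(\alpha,\beta)$-coordinates have the digits corresponding (under the fixed $T$-decoding) to $t_{-n},\dots,t_n$ in positions $-n,\dots,n$; this is open because it is cut out by finitely many strict digit inequalities. The main obstacle is the patching step: one must verify that the finitely many local affine prescriptions can be simultaneously extended to a single smooth diffeomorphism of $M$ without creating spurious orbits that could accidentally enter $U_{t_{-n},\dots,t_n}$ for a non-halting input. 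Since the cells are pairwise separated and $\Phi$ equals the identity outside a compact neighborhood of $K$, this is routine via partition-of-unity arguments, completing the construction.
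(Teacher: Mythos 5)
Your encoding of the tape via base-$B$ digits and your plan to implement one Turing step as a piecewise-affine map on a Cantor set, then globalize to a diffeomorphism, are the same core idea as the paper. But there is a fatal flaw in how you encode the \emph{state}: you take $M$ to be a \emph{disjoint union} of $|Q_U|$ copies of $(\R/\Z)^2$, with the state recorded by which component a point lives in. A diffeomorphism of a compact manifold must map each connected component bijectively onto a single connected component, so $\Phi$ induces a \emph{permutation} of $Q_U$. However, a single Turing step requires different cells $C_{t_0}$ \emph{within the same $q$-component} (one cell per head symbol $t_0$) to go to cells in different components $q' = q'(q,t_0)$, since the next state genuinely depends on the scanned symbol. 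This is impossible on a disconnected manifold. For the same reason your requirement that $\Phi$ ``be the identity outside a compact neighborhood of $K$'' cannot hold: near the boundary of a cell $C_{t_0}$ the map points into the $q'$-component, while in the gap just outside it you want the identity, i.e., a value in the $q$-component, which is a discontinuity. The paper sidesteps all of this by encoding the state as one of a finite family of \emph{disjoint squares $B_q$ inside a single torus factor}, so that $M = (\R/\Z)^2 \times (\R/\Z)^2$ is connected and the images $B'_{q,t_0} \times S_{t'_0}$ can be placed disjointly inside the appropriate squares $B_{q'}$; the extension is then achieved not by bump-function interpolation (which does not preserve injectivity or nondegeneracy of the Jacobian) but by a topological deformation of the complement, using that the domain and image boxes are disjoint contractible pieces with connected complement.

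A secondary point: you appeal to a partition-of-unity/bump-function interpolation to patch the local affine pieces into a diffeomorphism. Even on a connected manifold this is not a valid construction: convex combinations of diffeomorphisms need not be diffeomorphisms, and interpolating between an affine cell map and the identity can easily create folds. One needs either an explicit isotopy extension argument or (as in the paper) a careful arrangement of disjoint source and target boxes together with a deformation of the complement. Your outline also leans on encoding $(T,s)$ as input $\langle T,s\rangle$ to a universal machine $U$ and then ``decoding'' $U$'s output to read off the output of $T$; for this to yield sets $U_{t_{-n},\dots,t_n}$ that are cut out by \emph{finitely many} digit conditions, you would need the decoding to be local, which depends on the choice of universal machine and should at least be noted.
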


This claim is standard (and not surprising, given the close relationship between smooth dynamics and symbolic dynamics); we establish it in Section \ref{turing}.  Combining this with Theorem \ref{main} and Proposition \ref{examples}(iv), we conclude

\begin{corollary}[Potential wells can be universal Turing machines]\label{turam}  There exists a coercive potential $V \colon \R^m \to \R$ such that for any Turing machine $(Q, \mathtt{START}, \mathtt{HALT}, \Sigma, \delta)$ there exists an explicitly constructible open set $U_{t_{-n},\dots,t_n} \subset \R^m \times \R^m$ attached to each finite string $t_{-n},\dots,t_n \in \Sigma$, and an explicitly constructible (and bounded) point $y_s \in \R^m \times \R^m$ attached to each $s \in \Sigma^\Z$, such that the Turing machine $(Q, F, q_0, \Sigma, \delta)$ with input $s$ halts with output 
tape having coefficients $t_{-n},\dots,t_n$ in positions $-n,\dots,n$ respectively if and only if the trajectory in $\mathrm{Well}(\R^m,V)$ with initial data $(q_s,p_s)$ enters $\tilde U_{t_{-n},\dots,t_n}$ at some non-negative time.
\end{corollary}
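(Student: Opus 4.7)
The plan is to deduce this corollary by chaining together three results already in hand: Proposition \ref{pq}, which supplies a diffeomorphism $\Phi\colon M\to M$ on a compact smooth manifold that simulates an arbitrary Turing machine; Proposition \ref{examples}(iv), which says that the suspension of any diffeomorphism of a compact manifold carries a strongly adapted $1$-form; and Theorem \ref{main}, which turns the existence of such a $1$-form into an embedding into a potential well system.

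First I would take the Turing-universal diffeomorphism $\Phi$ from Proposition \ref{pq} and form its suspension $(N,Y)$, a compact smooth non-singular flow in which the trajectory starting from $(y,0) \in M\times[0,1) \subset N$ passes through $(\Phi^k(y), r)$ at time $k+r$ for every integer $k \geq 0$ and every $r \in [0,1)$. By Proposition \ref{examples}(iv), $(N,Y)$ carries a strongly adapted $1$-form, so the implication (v) $\Rightarrow$ (i) of Theorem \ref{main} (applied with $d=0$) together with Remark \ref{coer} produces an integer $m \geq 1$, a smooth coercive potential $V \colon \R^m \to \R$, and an embedding $\phi \colon N \to \R^m \times \R^m$ of $(N,Y)$ into $\mathrm{Well}(\R^m, V)$.

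It remains to transfer the symbolic data through $\phi$. For each $s \in \Sigma^\Z$, I would set $(q_s, p_s) \coloneqq \phi(y_s, 0)$, which is automatically bounded as $\phi(N)$ is compact. For each finite string $t_{-n},\dots,t_n \in \Sigma$, define the open subset $W_{t_{-n},\dots,t_n} \coloneqq U_{t_{-n},\dots,t_n} \times [0,1) \subset N$. Since $\phi(N)$ is a compact embedded submanifold of $\R^m \times \R^m$, the sets $\phi(W_{t_{-n},\dots,t_n})$ and $\phi(N \setminus W_{t_{-n},\dots,t_n})$ are disjoint, the latter being closed in $\R^m \times \R^m$; I may therefore choose an open set $\tilde U_{t_{-n},\dots,t_n} \subset \R^m \times \R^m$ with $\tilde U_{t_{-n},\dots,t_n} \cap \phi(N) = \phi(W_{t_{-n},\dots,t_n})$ (for instance, a tubular neighborhood of $\phi(W_{t_{-n},\dots,t_n})$ from which the disjoint compact set $\phi(N \setminus W_{t_{-n},\dots,t_n})$ has been excised).

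The encoding then works as follows: by the description of the suspension flow, the trajectory of $Y$ from $(y_s, 0)$ enters $W_{t_{-n},\dots,t_n}$ at some non-negative time if and only if $\Phi^k(y_s) \in U_{t_{-n},\dots,t_n}$ for some $k \geq 0$, which by Proposition \ref{pq} is equivalent to the Turing machine halting on input $s$ with the specified output. Since $\phi$ is a trajectory-preserving injective immersion, this is in turn equivalent to the trajectory of $\mathrm{Well}(\R^m, V)$ from $(q_s, p_s)$ entering $\tilde U_{t_{-n},\dots,t_n}$ at a non-negative time. I do not anticipate any real obstacle in this reduction: the substantive work has already been absorbed into Theorem \ref{main} (whose proof invokes Nash) and Proposition \ref{pq} (whose proof is deferred to Section \ref{turing}); the only point demanding attention is the choice of the sets $\tilde U_{t_{-n},\dots,t_n}$ detecting passage through $\phi(W_{t_{-n},\dots,t_n})$ without being triggered by other parts of $\phi(N)$, and this is handled cleanly by compactness of $\phi(N)$.
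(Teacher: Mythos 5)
Your proposal is correct and follows essentially the same route as the paper: suspend the Turing-universal diffeomorphism from Proposition \ref{pq}, invoke Proposition \ref{examples}(iv) to obtain a strongly adapted $1$-form, apply Theorem \ref{main} (with Remark \ref{coer}) to embed the suspension in a coercive potential well, and push the starting points and detecting sets through the embedding. The one place you are a bit more careful than the paper is in the choice of $\tilde U_{t_{-n},\dots,t_n}$: the paper simply takes a neighbourhood of $\phi(U_{t_{-n},\dots,t_n}\times\{0\})$, whereas you use $W=U_{t_{-n},\dots,t_n}\times[0,1)$ and select $\tilde U$ so that $\tilde U\cap\phi(N)=\phi(W)$ exactly, which makes the ``only if'' direction immediate rather than requiring the neighbourhood to be small enough; this is a clean refinement but not a different argument.
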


\begin{proof}  Let $\phi\colon M \to M$ be the diffeomorphism from Proposition \ref{pq}, and let $(\tilde M, X)$ be the suspension of $\phi$.  By Proposition \ref{examples}(iv), $(\tilde M, X)$ is compact, non-singular, and supports an strongly adapted $1$-form, and hence by Theorem \ref{main} it may be embedded in $\mathrm{Well}(\R^m, V)$ for some $m, V$, and by Remark \ref{coer} we may make $V$ coercive.  An inspection of Theorem \ref{main} shows that the embedding can be explicitly constructed (using for instance the Nash embedding construction\footnote{This construction involves solving some explicit elliptic (and slightly non-local) PDE, and so for the purposes of this paper one needs to view the solution of such PDE (which can be done for instance by performing a Picard iteration and then taking limits) as an ``explicit construction''.  One may also argue that earlier proofs of the embedding theorem that rely on Nash-Moser iteration also yield explicit constructions.} from \cite{gunther}).  The claim follows by taking $(q_s,p_s)$ to be the image of $(y_s,0)$ under this embedding, and $\tilde U_{t_{-n},\dots,t_n}$ to be (a neighbourhood of) the image of $U_{t_{-n},\dots,t_n} \times \{0\}$.
\end{proof}

Given that the halting problem is undecidable, we conclude in particular that there exist explicitly constructable potential well systems $\mathrm{Well}(\R^m, V)$ and an explicitly constructible trajectory in that system, such that it is undecidable whether that trajectory enters an explicit open set $U$ at some non-negative time.  As another special case, we may construct explicit trajectories which enter such an open set if and only if there is a counterexample to (say) the Riemann hypothesis, by constructing a suitable Turing machine to look for such counterexamples (using for instance Lagarias's formulation \cite{lagarias} $\sum_{d|n} d \leq H_n + \exp(H_n) \log(H_n)$ of that hypothesis, where $H_n = \sum_{i=1}^n \frac{1}{i}$ are the harmonic numbers); similarly for many other unsolved problems in mathematics.  Informally, we conclude that the dynamics of an arbitrary potential well system can be arbitrarily complicated.  Of course, the same also holds for the nonlinear wave equation.

\begin{remark}  In \cite{tao-navier}, the author speculated that if one could demonstrate that the Euler equations were Turing-complete, this could be used to create a solution to the Navier-Stokes equations that exhibited finite time blowup by creating initial data that is ``programmed'' to evolve to a rescaled version of itself (up to some hopefully negligible errors).  One can view Corollary \ref{turam} as establishing an analogous Turing-completeness for a nonlinear wave equation (although blowup for such equations was already demonstrated in \cite{tao-nlw}, at least in the case of three spatial dimensions).
\end{remark}

\begin{remark}  There are other results in the literature establishing that certain flows or maps can be universal Turing machines.  For instance, in \cite{gcb}, an analytic map on a non-compact manifold was constructed which could serve as a (robust) universal Turing machine, while in \cite{kcg} a piecewise linear continuous map was constructed which also served as a universal Turing machine.
\end{remark}

The author is supported by NSF grant DMS-1266164 and by a Simons Investigator Award.  We thank Sungjin Oh and Khang Hunyh for helpful conversations and corrections, to Ali Taghavi \cite{overflow} for pointing the author towards the reference \cite{sullivan}, and to Robert Bryant \cite{overflow} for supplying the counterexample in Proposition \ref{thecor}.  

\section{Proof of main theorem}\label{nash}

We now prove Theorem \ref{main}.  As noted previously, it is immediate from the constant embedding of $\mathrm{Well}(\R^m, V)$ in $\mathrm{NLW}((\R/\Z)^d, \R^m, V)$ that (i) implies (ii); similarly, (iii) implies (iv).  It is also trivial that (i) implies (iii), and that (ii) implies (iv).

Now we show that (iii) implies (v); this will be made redundant later when we show that (iv) also implies (v), but this simpler implication serves to motivate the argument in the latter case.  

We need the following simple averaging trick to upgrade weakly adapted forms to strongly adapted ones:

\begin{lemma}[Averaging argument]\label{avg}  Let $(N,Y)$ be a compact non-singular smooth flow.  Suppose that $\theta$ is a $1$-form weakly adapted to $(N,Y)$, with the property that $\theta(Y)$ does not vanish on any arc of the form $\{ e^{tY} y: 0 \leq t \leq T \}$ with $y \in N$ and $T>0$.  Then there exists another $1$-form $\tilde \theta$ which is strongly adapted to $(N,Y)$.
\end{lemma}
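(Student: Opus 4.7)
The plan is to produce $\tilde\theta$ by averaging $\theta$ along the flow over a finite time window. Concretely, pick any $T>0$ and set
\[
\tilde\theta \coloneqq \frac{1}{T}\int_0^T (e^{tY})^*\theta\ dt,
\]
where the integrand is a smooth $1$-form on the compact manifold $N$ depending smoothly on $t$, so the integral is a well-defined smooth $1$-form. I would then verify the two defining properties of being strongly adapted: strict positivity of $\tilde\theta(Y)$ and exactness of $\mathcal{L}_Y \tilde\theta$.

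For positivity, pullback by $e^{tY}$ commutes with evaluation against the flow-invariant vector field $Y$, so
\[
\tilde\theta(Y)(y) = \frac{1}{T}\int_0^T \theta(Y)(e^{tY}y)\ dt.
\]
By hypothesis, $\theta(Y)$ is everywhere non-negative, and by the assumption on arcs, the continuous function $t\mapsto \theta(Y)(e^{tY}y)$ is not identically zero on $[0,T]$; hence it is strictly positive on a subinterval and the integral is strictly positive. Thus $\tilde\theta(Y)>0$ pointwise on $N$.

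For exactness of $\mathcal{L}_Y\tilde\theta$, I would exploit that $\mathcal{L}_Y$ commutes with $(e^{tY})^*$ and equals $\tfrac{d}{dt}(e^{tY})^*$ at $t=0$, so the integral telescopes:
\[
\mathcal{L}_Y \tilde\theta = \frac{1}{T}\int_0^T \frac{d}{dt}\bigl((e^{tY})^*\theta\bigr)\ dt = \frac{1}{T}\bigl((e^{TY})^*\theta - \theta\bigr).
\]
Using the weak adaptedness hypothesis $\mathcal{L}_Y \theta = dL$ and again the interchange $(e^{sY})^* d = d(e^{sY})^*$, one has
\[
(e^{TY})^*\theta - \theta = \int_0^T (e^{sY})^* \mathcal{L}_Y\theta\ ds = \int_0^T (e^{sY})^* dL\ ds = d\Bigl(\int_0^T L\circ e^{sY}\ ds\Bigr),
\]
so $\mathcal{L}_Y \tilde\theta = d\tilde L$ with $\tilde L \coloneqq \frac{1}{T}\int_0^T L\circ e^{sY}\ ds$, which is a smooth function on $N$. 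This completes the verification.

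There is no real obstacle: the only subtle point is justifying strict positivity from the arc-nonvanishing hypothesis, which is where the assumption on $\theta$ is used in an essential way; the rest is the observation that averaging a weakly adapted form along the flow produces an exact Lie derivative via telescoping. The choice of $T$ is arbitrary (any positive value works), so compactness of $N$ is used only insofar as the flow maps $e^{tY}$ are globally defined, though it also yields the bonus that $\tilde\theta(Y)$ is bounded below by a positive constant.
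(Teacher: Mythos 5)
Your proposal is correct and follows essentially the same route as the paper: average the pullbacks $(e^{tY})^*\theta$ over a time window, use the arc-nonvanishing hypothesis to get strict positivity of $\tilde\theta(Y)$, and use $\mathcal{L}_Y\theta = dL$ together with the commutation of $(e^{tY})^*$ with $d$ and $\mathcal{L}_Y$ to show $\mathcal{L}_Y\tilde\theta$ is exact. The paper phrases the exactness step by observing each $(e^{tY})^*\theta$ is itself weakly adapted and invoking linearity of the average, while you make the same point via the telescoping identity $\mathcal{L}_Y\tilde\theta = \tfrac{1}{T}\bigl((e^{TY})^*\theta-\theta\bigr)$; these are computationally equivalent.
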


\begin{proof}  For any time $t$, the flow $e^{tY}$ preserves the vector field $Y$ and commutes with ${\mathcal L}_Y$.  As $\theta$ is weakly adapted to $(N,Y)$, we conclude that the pullbacks $(e^{tY})^* \theta$ are also weakly adapted to $(N,Y)$, and by linearity we conclude that the average $\int_0^1 (e^{tY})^* \theta\ dt$ is also weakly adapted.  However, since $\theta(Y)$ does not vanish on any arc, the quantity
$$ \left(\int_0^1 (e^{tY})^* \theta\ dt\right)(Y) = \int_0^1 (e^{tY})^* (\theta(Y))\ dt$$
never vanishes, and the claim follows.
\end{proof}

From the hypothesis (iii), we have a smooth potential $V\colon M \to \R$ on a Riemannian manifold $M$ and an embedding $\phi \colon N \to T^* M$ of $(N,Y)$ into $\mathrm{Well}(M,V)$. By Proposition \ref{not}, the canonical $1$-form $\theta$ on $T^* M$ is weakly adapted to $\mathrm{Well}(M,V)$.
By Proposition \ref{pullback}, the pullback $\phi^* \theta$ is then weakly adapted to $(N,Y)$.  By Lemma \ref{avg}, we can conclude (v) unless $(\phi^* \theta)(Y)$ vanishes on some arc $\{ e^{tY} y: 0 \leq t \leq T \}$ with $y \in N$ and $T > 0$.  Suppose for contradiction that we have such a vanishing.  If we write $(p(t),q(t)) = \phi( e^{tY} y)$ for $t \in \R$, then (by the definition of the canonical $1$-form $\theta$) $(p,q)$ is a trajectory in $\mathrm{Well}(M,V)$ with the property that $p(t)( \partial_t q(t) )$ vanishes for $0 \leq t \leq T$.  From \eqref{goq} we have
$$ p(t)( \partial_t q(t) ) = |p(t)|_{g(q(t))^{-1}}^2$$
and hence $p(t)$ vanishes for $0 \leq t \leq T$, and hence $\partial_t q(t) = p(t)$ vanishes also.  In particular, $\partial_t (p(t), q(t)) = d\phi( Y(y) )$ vanishes at $t=0$, which contradicts the fact that $\phi$ is an immersion and that $Y$ is non-vanishing at $y$.  This proves that (iii) implies (v).

For future reference, we observe that the pullback $\phi^* \theta$ used in the above argument can be expressed using local canonical coordinates $q_1,\dots,q_m, p_1,\dots,p_m$ for $M$ and local coordinates $y_1,\dots,y_n$ for $N$ (where $m,n$ are the dimensions of $M,N$ respectively) as
\begin{equation}\label{qf}
\begin{split}
 \phi^* \theta(y) &= \sum_{i=1}^m (p_i \circ \phi)(y) d(q_i \circ \phi)(y)\\
&= \sum_{i=1}^m \sum_{j=1}^n p_i(\phi(y)) \partial_{y_j}(q_i \circ \phi)(y) dy_j \\
&= \sum_{j=1}^n p(\phi(y)) ( \partial_{y_j}(q \circ \phi)(y) ) dy_j.
\end{split}
\end{equation}

Now we show that (iv) implies (v).  This argument is similar to the previous one, but in order to avoid performing any differential geometry on an infinite dimensional manifold, we will use more explicit computations in coordinates than before.

By hypothesis, we have a smooth potential $V\colon M \to \R$ on a Riemannian manifold $M$ and an embedding $\phi \colon N \to T^* M$ of $(N,Y)$ into $\mathrm{NLW}((\R/\Z)^d,M,V)$.  We write $\phi = (Q, P)$, where for each $y \in N$ and $x \in (\R/\Z)^d$, $Q(y,x) = (q \circ \phi)(y)(x)$ is a point in $M$, and $P(y,x) = (p \circ \phi)(y)(x)$ is a cotangent vector in $T_{Q(y,x)} M$, with $P$ and $Q$ varying smoothly in both the $x$ and $y$ variables.

The analogue of the pullback form $\phi^* \theta$ used in the previous argument will be given in local coordinates $y_1,\dots,y_n$ for $N$ by the formula
\begin{equation}\label{tty}
\tilde \theta \coloneqq \sum_{j=1}^n \left(\int_{(\R/\Z)^d} P(y,x) ( \partial_{y_j} Q(y,x) )\ d\mathrm{Vol}(x)\right)\ dy_j.
\end{equation}
where $d\mathrm{Vol}$ is the standard volume form on $(\R/\Z)^d$. It is easy to see that this does not depend on the choice of local coordinates $y_1,\dots,y_n$, so that $\tilde \theta$ is indeed a $1$-form.

It is convenient to work in local coordinates $y_1,\dots,y_n$ for which the vector field $Y$ is just $\frac{d}{dy_n}$, so that the Lie derivative ${\mathcal L}_Y$ is just $\partial_{y_n}$; such a coordinate system is always locally available as $Y$ is non-singular.  In these coordinates, we see from \eqref{qwert} that we have the equations of motion
\begin{align*}
\partial_{y_n} Q &= g(Q)^{-1} \cdot P \\
\nabla_{y_n} P &= \sum_{i=1}^d g(Q) \cdot \nabla_{x_i} \partial_{x_i} Q -(d V)(Q),
\end{align*}
where $\nabla$ is the pullback of the Levi-Civita connection by $Q$, and we suppress the variables $y,x$ for brevity.  In particular, we have
\begin{align*}
\tilde \theta(Y) &= \int_{(\R/\Z)^d} P ( \partial_{y_n} Q )\ d\mathrm{Vol}(x) \\
&= \int_{(\R/\Z)^d} | P|_{g(Q)^{-1}}^2\ d\mathrm{Vol}(x)
\end{align*}
and hence $\tilde \theta(Y)$ is always non-negative.  Furthermore, the only way that $\tilde \theta(Y)$ could vanish on an small arc $\{ e^{tY} y: 0 \leq t \leq T \} =  \{ y + t e_n: 0 \leq t \leq T\}$ in these local coordinates is if $P(y+te_n,x)$ vanished for all $0 \leq t \leq T$ and $x \in (\R/\Z)^d$, which by the equations of motion show that $\partial_{y_n} Q(y+te_n,x)$ vanished also; thus the map $t \mapsto \phi(y + t e_n)$ from $[0,T]$ to $C^\infty((\R/\Z)^d \to T^* M)$ is stationary at $t=0$, contradicting the hypothesis that $\phi$ is an embedding.  Thus $\tilde \theta(Y)$ does not vanish on any such arc.

Now we compute the Lie derivative ${\mathcal L}_Y \tilde \theta$.   In local coordinates this is $\partial_{y_n} \tilde \theta$.
From the Leibniz rule, the Lie derivative in these coordinates becomes 
$$
{\mathcal L}_Y \tilde \theta = \sum_{j=1}^n \left(\int_{(\R/\Z)^d} (\nabla_{y_n} P)( \partial_{y_j} Q) + P( \nabla_{y_n} \partial_{y_j} Q )\ d\mathrm{Vol}(x)\right)\ dy_j.$$
As the Levi-Civita connection is torsion-free, $\nabla_{y_n} \partial_{y_j} Q$ is equal to $\nabla_{y_j} \partial_{y_n} Q$.
Using the equations of motion, the above expression then becomes
\begin{align*}
&\sum_{j=1}^n \sum_{i=1}^d \left(\int_{(\R/\Z)^d} (g(Q) \cdot \nabla_{x_i} \partial_{x_i} Q)( \partial_{y_j} Q)\ d\mathrm{Vol}(x)\right)\ dy_j \\
& \quad - \sum_{j=1}^n \left(\int_{(\R/\Z)^d} (dV(Q))(\partial_{y_j} Q)\ d\mathrm{Vol}(x)\right)\ dy_j \\
& \quad + \sum_{j=1}^n \left(\int_{(\R/\Z)^d} P( \nabla_{y_j} (g(Q)^{-1} \cdot P) )\ d\mathrm{Vol}(x)\right)\ dy_j.
\end{align*}
The first term can be rewritten as
$$ \sum_{j=1}^n \left(\sum_{i=1}^d \int_{(\R/\Z)^d} g(Q)( \nabla_{x_i} \partial_{x_i} Q, \partial_{y_j} Q )\ d\mathrm{Vol}(x)\right)\ dy_j $$
which after integration by parts (recalling that the Levi-Civita connection is parallel to the metric $g$) becomes
$$ - \sum_{j=1}^n \left(\sum_{i=1}^d \int_{(\R/\Z)^d} g(Q)( \partial_{x_i} Q, \nabla_{x_i} \partial_{y_j} Q )\ d\mathrm{Vol}(x)\right)\ dy_j.$$
Using the torsion-free nature of the Levi-Civita connection, this is
$$ - \sum_{j=1}^n \left(\sum_{i=1}^d \int_{(\R/\Z)^d} g(Q)( \partial_{x_i} Q, \nabla_{y_j} \partial_{x_i} Q )\ d\mathrm{Vol}(x)\right)\ dy_j$$
which since the Levi-Civita connection is parallel to $g$, becomes
$$ - \frac{1}{2} \sum_{j=1}^n \partial_{y_j} (\sum_{i=1}^d \int_{(\R/\Z)^d} g(Q)( \partial_{x_i} Q, \partial_{x_i} Q )\ d\mathrm{Vol}(x))\ dy_j.$$
This is an exterior derivative and is thus exact.  Similarly, the second term
$$ - \sum_{j=1}^n \left(\int_{(\R/\Z)^d} (dV(Q))(\partial_{y_j} Q)\ d\mathrm{Vol}(x)\right)\ dy_j $$
can be written using the chain rule as an exterior derivative
$$ - \sum_{j=1}^n \partial_{y_j} \left(\int_{(\R/\Z)^d} V(Q)\ d\mathrm{Vol}(x)\right)\ dy_j$$
and is thus also exact.  Finally, the third term
$$\sum_{j=1}^n \left(\int_{(\R/\Z)^d} P( \nabla_{y_j} (g(Q)^{-1} \cdot P) )\ d\mathrm{Vol}(x)\right)\ dy_j$$
can be written using the Leibniz rule and the fact that the Levi-Civita connection is parallel to $g$ as yet another exterior derivative
$$ \frac{1}{2} \sum_{j=1}^n \partial_{y_j} \left(\int_{(\R/\Z)^d} |P|^2_{g(Q)^{-1}}\ d\mathrm{Vol}(x)\right)\ dy_j$$
and is also exact.  Thus ${\mathcal L}_Y \tilde \theta$ is exact.  Indeed, we have shown the identity
$${\mathcal L}_Y \tilde \theta = dL$$
where $L \colon N \to \R$ is the spatially integrated Lagrangian
$$ L \coloneqq \int_{(\R/\Z)^d} \frac{1}{2} |P|^2_{g(Q)^{-1}} - \sum_{i=1}^d |\partial_{x_i} Q|_{g(Q)}^2 - V(Q)\ d\mathrm{Vol}(x);$$
this should be compared with the proof of Proposition \ref{not}.  

From the above discussion we see that $\tilde \theta$ is weakly adapted to $(N,Y)$ with $\tilde \theta(Y)$ not vanishing identically on any arc, and so the claim (v) follows from Lemma \ref{avg} as before.

\begin{remark}  In the above calculation, one could have replaced the torus $(\R/\Z)^d$ with any other compact Riemannian manifold (replacing the volume form $d\mathrm{Vol}$ by the Riemannian measure), albeit at the cost of having some rather confusing notation to treat the three different Riemannian manifolds that are now involved; we leave the details to the interested reader.
\end{remark}

Finally, we show that (v) implies (i). By hypothesis, we have a smooth $1$-form $\theta$ on $N$ and a smooth function $L \colon N \to \R$ such that \begin{equation}\label{thetaf}
{\mathcal L}_Y \theta = dL
\end{equation}
and such that $\theta(Y)$ is strictly positive.  By compactness, $\theta(Y)$ is bounded away from zero.

The first step is to find an embedding $(q,p) \colon N \to \R^m \times \R^m$, with $p = {\mathcal L}_Y q$, such that the pullback of the canonical $1$-form $\sum_{i=1}^m p_i dq_i$ by $(q,p)$ is equal to $\theta$.  Our main tool for doing this will be the Nash embedding theorem.

We place an arbitrary smooth Riemannian metric $g$ on $N$.  We define a new metric $\tilde g$ by the formula
$$ \tilde g(aY + Z, bY + W) \coloneqq ab \theta(Y) + a \theta(W) + b \theta(Z) + C g(Z,W) $$
whenever $a,b \in \R$ and $Z,W$ are orthogonal to $Y$ (with respect to $g$), where $C>0$ is a large constant to be chosen later.  This is clearly a symmetric $2$-tensor, and
$$ \tilde g(aY+Z, aY+Z) = a^2 \theta(Y) + 2a \theta(Z) + C g(Z,Z)$$
whenever $a \in \R$ and $Z$ is orthogonal to $Y$ (with respect to $g$).  Since $\theta(Y)$ is bounded away from zero, we see that $\tilde g$ is positive definite if $C$ is large enough, so that $(M, \tilde g)$ is a Riemannian manifold.  Also, we see from construction that $\tilde g( Z, X ) = \theta(Z)$ for all vector fields $Z$, thus $\theta$ and $Y$ are duals of each other with respect to $\tilde g$.

We now apply the Nash embedding theorem \cite{nash}.  This produces a smooth isometric embedding $q \colon N \to \R^m$ from $(N,\tilde g)$ to a Euclidean space, thus $q$ is a smooth injective immersion such that
$$ \langle \partial_{y_i} q, \partial_{y_j} q \rangle_{\R^m} = \tilde g( e_i, e_j )$$
in local coordinates $y_1,\dots,y_n$ for all $i=1,\dots,n$, where $\langle, \rangle_{\R^m}$ is the Euclidean inner product on $\R^m$.  In particular (using coordinates in which $Y = \frac{d}{dy_n}$) we have
$$ \langle \partial_{y_i} q, \partial_{y_n} q \rangle_{\R^m} = \theta\left( \frac{d}{dy_i} \right)$$
for $i=1,\dots,n$; if we then define $p\colon M \to \R^m$ in coordinates to be 
$$p \coloneqq \partial_{y_n} q$$
then we see that 
\begin{equation}\label{theta-form}
 \theta = \sum_{i=1}^n \langle p, \partial_{y_i} q \rangle_{\R^m} \frac{d}{dx_i}
\end{equation}
in coordinates.  In coordinate-free notation, we have $p = {\mathcal L}_Y q$, and $\theta$ is the pullback of the canonical $1$-form by $(q,p)$.

Since the map $q\colon N \to\R^m$ was already a smooth injective immersion, and $p\colon N \to\R^m$ is smooth, the map $\phi\colon N \to\R^m \times \R^m$ defined by $\phi(y) \coloneqq (q(y), p(y))$ is also a smooth injective immersion.  To conclude (i), it suffices to locate a smooth potential $V \colon \R^m \to \R$ so that $\phi$ is a morphism from $(N,Y)$ to $\mathrm{Well}(\R^m, V)$.  By \eqref{system}, this amounts to verifying the equations of motion
\begin{align}
{\mathcal L}_Y q &= p \label{yp-0}\\
{\mathcal L}_Y p &= - (\nabla_{\R^m} V)(q) \label{yp-1}.
\end{align}
The first equation is already verified, so we work on the second.  Again, we work in local coordinates for which $Y = \frac{d}{dy_n}$, so that ${\mathcal L}_Y$ is just the partial derivative $\partial_{y_n}$.

From \eqref{thetaf}, \eqref{theta-form} we have
$$ \partial_{y_n} \langle p, \partial_{y_i} q \rangle_{\R^m} = \partial_{y_i} L$$
for all $i=1,\dots,n$.  We now let $v\colon N \to\R$ be the smooth function such that
$$ L = \frac{1}{2} |p|_{\R^m}^2 - v$$
where $| |_{\R^m}$ denotes the Euclidean norm on $\R^m$; comparing with \eqref{L-def}, we see that $v$ is ``supposed'' to be $V \circ q$.  We now compute
\begin{align*}
\partial_{y_n} \langle p, \partial_{y_i} q \rangle_{\R^m} &= \partial_{y_i} L \\
&= \langle p, \partial_{y_i} p \rangle_{\R^m} - \partial_{y_i} v \\
&= \langle p, \partial_{y_n} \partial_{y_i} q \rangle_{\R^m}
\end{align*}
and hence by the Leibniz rule
\begin{equation}\label{uouo}
 \langle \partial_{y_n} p, \partial_{y_i} q \rangle_{\R^m} = - \partial_{y_i} v.
\end{equation}
Let $q(N) \subset \R^m$ be the image of $N$ under the smooth injective immersion $q$: this is a compact $n$-dimensional submanifold of $\R^m$, with a smooth inverse map $q^{-1} \colon q(N) \to N$.  On $q(N)$, we define the acceleration field $a \colon q(N) \to \R^m$ and the restricted potential field $V_0 \colon q(N) \to \R^m$ by the formulae
$$ a \coloneqq \partial_{y_n} p \circ q^{-1}$$
and
$$ V_0 \coloneqq v \circ q^{-1}.$$
At any point $q(y)$ of $q(N)$, we see from \eqref{uouo} and the chain rule that
$$ \langle a(q(y)), \partial_{y_i} q(y) \rangle_{\R^m} = - \langle \nabla_{q(N)} V_0(q(y)), \partial_{y_i} q(y) \rangle_{T_{q(y)} q(N)}$$
where $T_{q(y)} q(N)$ is the tangent space to $q(N)$ at $q(y)$, viewed as a subspace of $\R^m$ with the induced inner product (and noting that $\partial_{y_i} q(y)$ lies in $T_{q(y)} q(N)$), and $\nabla_{q(N)}$ is the gradient operator associated to the submanifold $q(N)$ of the Euclidean space $\R^m$.

As $q$ is am immersion, the tangent vectors $\partial_{y_1} q(y), \dots, \partial_{y_n} q(y)$ form a basis for $T_{q(y)} q(N)$.  We conclude that
$$ a(z) = - \nabla_{q(N)} V_0(z) + n(z) $$
for all $z \in q(N)$, where $n(z)$ is a vector in $\R^m$ orthogonal to the tangent space $T_{q(y)} q(N)$ and varying smoothly in $z$.  Using Fermi normal coordinates around the smooth compact submanifold $q(N)$ of $\R^m$, we may thus find a smooth function $V \colon {\mathcal N}_\eps(q(N)) \to \R$ on a tubular neighbourhood ${\mathcal N}_\eps(q(N))$ of $q(N)$ which extends the function $V_0\colon N \to\R$, and is such that
$$ a(z) = - \nabla_{\R^m} V(z)$$
for all $z \in q(N)$.  By multiplying $V$ by a smooth cutoff function supported on ${\mathcal N}_\eps(q(N))$ and equal to $1$ on a smaller neighbourhood of $q(N)$, we may assume without loss of generality that $V$ extends smoothly to a (compactly supported) potential $V \colon\R^m \to \R$.  From the definition of $a$, we now have
$$ \partial_{y_n} p = -(\nabla V)(q)$$
on all of $N$, giving the required equation of motion \eqref{yp-1}. This concludes the implication of (i) from (v), and the proof of Theorem \ref{main} is complete.

\begin{remark}  Using the version of the Nash embedding theorem by Gunther \cite{gunther}, one can take the dimension $m$ of the potential well to be $\max(n(n=5)/2, n(n+3)/2 +5)$.
\end{remark}

\section{A flow without a strongly adapted $1$-form}\label{nonem}

We now present the argument of Bryant \cite{overflow} that proves Proposition \ref{thecor}.  Let $Y$ denote the vector field
$$ Y \coloneqq \sin(2\pi x) \frac{d}{dx} + \cos(2\pi x) \frac{d}{dy} $$
on the $2$-torus $(\R/\Z)^2$.  This is clearly a compact non-singular smooth flow.  Suppose for contradiction that we could find a $1$-form $\theta$ on this torus with $\theta(Y)$ positive and 
$${\mathcal L}_Y \theta = dL$$
for some smooth $L \colon (\R/\Z)^2 \to \R$.  By Cartan's formula, we have
$${\mathcal L}_Y \theta = d(\theta(Y)) + \iota_Y(d\theta)$$
where $\iota_Y$ denotes contraction by $Y$.  If we then define the ``Hamiltonian'' $H \coloneqq \theta(Y) - L$, we thus have
\begin{equation}\label{dh}
 dH = - \iota_Y(d\theta).
\end{equation}
Contracting this against $Y$ once more, we conclude that ${\mathcal L}_Y H = 0$, thus $H$ is constant along trajectories of the flow.

A trajectory $t \mapsto (x(t), y(t))$ of the flow solves the system of ODE
\begin{align*}
\partial_t x(t) &= \sin(2\pi x(t)) \\
\partial_t y(t) &= \cos(2\pi x(t)).
\end{align*}
The first ODE $\partial_t x(t) = \sin(2\pi x(t))$ has two fixed point solutions in $\R/\Z$: the repelling fixed point $x(t) = 0 \hbox{ mod } 1$ and the attracting fixed point $x(t) = 1/2 \hbox{ mod } 1$.  An inspection of the sign pattern of $\sin(2\pi x)$ reveals that all other solutions to this ODE go to $0 \hbox{ mod } 1$ as $t \to -\infty$ and to $1/2 \hbox{ mod } 1$ as $t \to +\infty$.  If we define the invariant circles
\begin{align*}
C_0 &\coloneqq \{ 0 \hbox{ mod } 1 \} \times \R/\Z \\
C_1 &\coloneqq \{ 1/2 \hbox{ mod } 1 \} \times \R/\Z 
\end{align*}
we conclude that the trajectories to the flow $((\R/\Z)^2, Y)$ either stay within $C_0$, stay within $C_1$, or else approach $C_0$ (oscillating infinitely often in the $y$ direction) as $t \to -\infty$ and approach $C_1$ (again oscillating infinitely often) as $t \to +\infty$ (cf. Figure \ref{fig:vec}).  In particular, as $H$ is continuous and constant along trajectories, $H$ must be constant on $C_0$, and the value of $H$ on any other trajectory must equal its value at $C_0$, and hence $H$ is constant on the entire $2$-torus.  From \eqref{dh} we conclude that $\iota_Y(d\theta) = 0$; since $d\theta$ is a $2$-form on a two-dimensional manifold, and $Y$ never vanishes, we conclude that $d\theta$ must vanish identically.  By Stokes theorem, this implies that
$$ \int_{C_0} \theta = \int_{C_1} \theta $$
where we orient both $1$-cycles $C_0, C_1$ in the forward $y$ direction.  But $Y$ is equal to $(0,1)$ on $C_0$ and $(0,-1)$ on $C_1$, hence
$$ \int_{\R/\Z} \theta(Y)(0 \hbox{ mod } 1, y)\ dy= - \int_{\R/\Z} \theta(Y)(1/2 \hbox{ mod } 1, y)\ dy$$
which is inconsistent with $\theta(Y)$ being everywhere positive.  The claim follows.

\section{Encoding a Turing machine}\label{turing}

We now prove Proposition \ref{pq}.  It will suffice show Proposition \ref{pq} for a single Turing machine, namely a \emph{universal} Turing machine (see e.g. \cite[\S 1.4]{arora}), since by definition this machine can be used to model all other Turing machines.

Thus, let us now fix a universal Turing machine $(Q,\mathtt{START}, \mathtt{HALT}, \Sigma, \delta)$.  The running state of such a machine is described by a state $q \in Q$ and a tape $t \in \Sigma^\Z$.  The state space $Q$ is already a (zero-dimensional) compact smooth manifold, but the tape space $\Sigma^\Z$ is not.  However, this is easily fixed via a suitable embedding.  Firstly, without loss of generality we may suppose that $\Sigma = \{0,1,\dots,k\}$ for some natural number $k \geq  1$.  Let $b$ be a base much larger than $k$ (e.g. $b=10k$ will suffice).  
We then create an embedding $f \colon \Sigma^\Z \to (\R/\Z)^2$ into the $2$-torus $(\R/\Z)^2$ by defining
$$ f( (t_n)_{n \in \Z} ) \coloneqq \left( \sum_{n=1}^\infty t_n b^{-n} \hbox{ mod } 1, \sum_{n=1}^\infty t_{1-n} b^{-n} \hbox{ mod } 1 \right)$$
whenever $t_n \in \{0,\dots,k\}$ for $n \in \Z$.  The image $f(\Sigma^\Z)$ is thus the product of two Cantor sets.  We claim that there is a diffeomorphism $\phi \colon (\R/\Z)^2 \to (\R/\Z)^2$ that encodes the right shift in the sense that
\begin{equation}\label{fat}
 f( (t_{n-1})_{n \in \Z} ) = \phi( f( (t_{n})_{n \in \Z} ) )
\end{equation}
whenever $t_n \in \{0,\dots,k\}$ for $n \in \Z$.  Indeed, for any $j = 0,\dots,k$, define the rectangles $R_j, S_j \subset (\R/\Z)^2$ by the formulae
\begin{align*}
R_j &\coloneqq \left[0, \frac{k+1}{b}\right] \times \left[\frac{j}{b}, \frac{j}{b} + \frac{k+1}{b^2}\right] \hbox{ mod } \Z^2 \\
S_j &\coloneqq \left[\frac{j}{b}, \frac{j}{b} + \frac{k+1}{b^2}\right] \times \left[0, \frac{k+1}{b}\right] \hbox{ mod } \Z^2.
\end{align*}
For $b$ large enough, $R_0,\dots,R_k$ are disjoint rectangles in $(\R/\Z)^2$, and similarly for $S_0,\dots,S_k$.  One can then construct a diffeomorphism $\phi$ that maps each $R_j$ (affine-)linearly to $S_j$ by the formula
$$ \phi\left( \alpha, \frac{j}{b} + \frac{\beta}{b} \right) \coloneqq \left( \frac{j}{b} + \frac{\alpha}{b}, \beta \right)$$
for all $\alpha,\beta \in [0, \frac{1}{b-1}]$, and maps $(\R/\Z)^2 \backslash (R_0 \cup \dots \cup R_k)$ smoothly to $(\R/\Z)^2 \backslash (S_0 \cup \dots \cup S_k)$ in some arbitrary fashion; this is possible because one can smoothly deform the closure of $(\R/\Z)^2 \backslash (R_0 \cup \dots \cup R_k)$ to the closure of $(\R/\Z)^2 \backslash (S_0 \cup \dots \cup S_k)$ while mapping the boundary of each $R_j$ to the corresponding boundary of $S_j$ without any rotation.  One can then check \eqref{fat} by direct computation.

To each state $q \in Q$ we associate a closed square $B_q$ in $(\R/\Z)^2$, such that the $B_q$ are all disjoint (we need two-dimensions here to prevent the complement of the union of $\bigcup_{q \in Q} B_q$ from being disconnected).  Our manifold $M$ will
then be the $4$-torus
$$ M \coloneqq (\R/\Z)^2 \times (\R/\Z)^2.$$
To each state $s \in \Sigma^\Z$, the starting point $y_s \in M$ is then defined by the formula
$$ y_s \coloneqq ( x_{\mathtt{START}}, f(s) )$$
where $x_{\mathtt{START}}$ is the centre of $B_{\mathtt{START}}$,
and the open set $U_{t_{-n},\dots,t_n} \subset M$ will be defined as
$$U_{t_{-n},\dots,t_n} \coloneqq V \times W_{-t_n,\dots,t_n},$$
where $V \subset (\R/\Z)^2$ is any open neighbourhood of the square $B_{\mathtt{HALT}}$ that does not intersect any other square $B_q$, and $W_{-t_n,\dots,t_n} \subset (\R/\Z)^2$ is any open neighbourhood of the two-dimensional Cantor set
$$ \{ f( (t'_m)_{m \in \Z} ): t'_m = t_m \hbox{ for all } m=-n,\dots,n \} $$
that does not contain any other point of the Cantor set $f(\Sigma^\Z)$.  Finally, the diffeomorphism $\Phi\colon M \to M$ is defined as follows.  
For each $q \in Q \backslash \{ \mathtt{HALT} \}$ and $t_0 \in \{0,\dots,k\}$, write
\begin{equation}\label{qat}
 \delta(q,t_0) = (q', t'_0, \epsilon),
\end{equation}
and let $B'_{q,t_0}$ be a closed small ball contained in $B_{q'}$, such that the $B'_{q,t_0}$ are disjoint as $q,t_0$ vary.  Let $L_{q,t_0} \colon B_q \to B'_{q,t_0}$ be the homothety that maps $B_q$ diffeomorphically onto $B'_{q,t_0}$.  On the four-dimensional box 
$$ B_q \times R_{t_0},$$
we define $\Phi$ to be the map
$$ \Phi( z, w ) \coloneqq \left( L_{q,t_0}(z), \phi^\epsilon\left(w + (0, - \frac{t_0}{b} + \frac{t'_0}{b})\right) \right)$$
for $z \in B_q$ and $w \in R_{t_0}$.  Because the squares $B'_{q,t_0}$ are disjoint, the images $\Phi(B_q \times R_{t_0})$ are disjoint (and diffeomorphic to four-dimensional boxes) as $q,t_0$ vary.  By smoothly deforming the complement of these images back to the original local $B_q \times R_{t_0}$ (which is possible due to the connected nature of the complement and the contractible nature of the boxes $B_q \times R_{t_0}$), we can then extend $\Phi$ to be a diffeomorphism on all of $M$.

By construction, if a point $(z,w)$ is such that $z \in B_q$ and $w = f( (t_n)_{n \in \Z} )$ for some $q \in Q \backslash \{ \mathtt{HALT} \}$ and $t_n \in \{0,\dots,k\}$, then the image $(z',w') = \Phi(z,w)$ of $(z,w)$ under $\Phi$ will be such that $z' \in B_{q'}$ and $w' = f( (t'_n)_{n \in \Z} )$, where the tape $(t'_n)_{n \in \Z}$ is obtained from $(t_n)_{n \in \Z}$ by first replacing $t_0$ with $t'_0$, and then shifting by $\epsilon$, where $q'$, $t'_0$ and $\epsilon$ are defined by \eqref{qat}.  Iterating this, we obtain Proposition \ref{pq} for the given universal Turing machine, and hence for arbitrary Turing machines.

\begin{remark} The above construction reveals in fact that the trajectory $y_s, \Phi(y_s), \Phi^2(y_s), \dots$ will either enter $U_{t_{-n},\dots,t_n}$, or stay a fixed distance away from this set.  Thus one only needs to be able to measure points in $M$ to some fixed non-zero accuracy in order to determine whether a given Turing machine with a given input halts or not, and to inspect a finite number of symbols of the output.
\end{remark}

\end{document}